\documentclass[12pt,reqno]{amsart}
\usepackage{latexsym}
\usepackage{amssymb}
\usepackage{mathrsfs}
\usepackage{amsmath}
\usepackage{comment}
\usepackage{fancybox,color}
\usepackage{enumerate}
\usepackage[utf8]{inputenc}
\usepackage{soul}
\usepackage[colorlinks=true, linkcolor=blue, citecolor=blue]{hyperref}

{\catcode`p =12 \catcode`t =12 \gdef\eeaa#1pt{#1}} \def\accentadjtext#1{\setbox0\hbox{$#1$}\kern \expandafter\eeaa\the\fontdimen1\textfont1 \ht0}
\def\accentadjscript#1{\setbox0\hbox{$#1$}\kern \expandafter\eeaa\the\fontdimen1\scriptfont1 \ht0}
\def\accentadjscriptscript#1{\setbox0\hbox{$#1$}\kern \expandafter\eeaa\the\fontdimen1\scriptscriptfont1 \ht0}
\def\accentadjtextback#1{\setbox0\hbox{$#1$}\kern -\expandafter\eeaa\the\fontdimen1\textfont1 \ht0}
\def\accentadjscriptback#1{\setbox0\hbox{$#1$}\kern -\expandafter\eeaa\the\fontdimen1\scriptfont1 \ht0}
\def\accentadjscriptscriptback#1{\setbox0\hbox{$#1$}\kern -\expandafter\eeaa\the\fontdimen1\scriptscriptfont1 \ht0}
\def\itoverline#1{{\mathsurround0pt\mathchoice
        {\rlap{$\accentadjtext{\displaystyle #1}
                \accentadjtext{\vrule height1.593pt}
                \overline{\phantom{\displaystyle #1}
                \accentadjtextback{\displaystyle #1}}$}{#1}}
        {\rlap{$\accentadjtext{\textstyle #1}
                \accentadjtext{\vrule height1.593pt}
                \overline{\phantom{\textstyle #1}
                \accentadjtextback{\textstyle #1}}$}{#1}}
        {\rlap{$\accentadjscript{\scriptstyle #1}
                \accentadjscript{\vrule height1.593pt}
                \overline{\phantom{\scriptstyle #1}
                \accentadjscriptback{\scriptstyle #1}}$}{#1}}
        {\rlap{$\accentadjscriptscript{\scriptscriptstyle #1}
                \accentadjscriptscript{\vrule height1.593pt}
                \overline{\phantom{\scriptscriptstyle #1}
                \accentadjscriptscriptback{\scriptscriptstyle #1}}$}{#1}}}}

\newcommand{\iol}{\itoverline}

\newcommand{\ch}[1]{{\mbox{\raise 1pt\hbox{\large$\chi$}}}_{\lower 1pt\hbox{$\scriptstyle #1$}}}

\usepackage[left=2.8cm,top=2.5cm,right=2.8cm,bottom=2.5cm]{geometry}

\def\1{\raisebox{2pt}{\rm{$\chi$}}}

\newtheorem{theorem}{Theorem}[section]

\newtheorem{lemma}[theorem]{Lemma}

\theoremstyle{definition}
\newtheorem{definition}[theorem]{Definition}
\newtheorem{remark}[theorem]{Remark}

\IfFileExists{mathx10.tfm}{\DeclareFontFamily{U}{mathx}{}
  \DeclareFontShape{U}{mathx}{m}{n}{<-> mathx10}{}
  \DeclareSymbolFont{mathx}{U}{mathx}{m}{n}
  \DeclareMathAccent{\widehat}{0}{mathx}{"70}
  \DeclareMathAccent{\widecheck}{0}{mathx}{"71}
}{}

\newcommand{\R}{{\mathbb R}}

\newcommand{\Z}{{\mathbb Z}}

\newcommand{\D}{{\mathcal D}}

\def\1{\raisebox{2pt}{\rm{$\chi$}}}

\def\cprime{$'$}

\def\vint_#1{\mathchoice {\mathop{\kern 0.2em\vrule width 0.6em height 0.69678ex depth -0.58065ex
                \kern -0.8em \intop}\nolimits_{\kern -0.4em#1}}{\mathop{\kern 0.1em\vrule width 0.5em height 0.69678ex depth -0.60387ex
                \kern -0.6em \intop}\nolimits_{#1}}{\mathop{\kern 0.1em\vrule width 0.5em height 0.69678ex
            depth -0.60387ex
                \kern -0.6em \intop}\nolimits_{#1}}{\mathop{\kern 0.1em\vrule width 0.5em height 0.69678ex depth -0.60387ex
                \kern -0.6em \intop}\nolimits_{#1}}}
\def\vintslides_#1{\mathchoice {\mathop{\kern 0.1em\vrule width 0.5em height 0.697ex depth -0.581ex
                \kern -0.6em \intop}\nolimits_{\kern -0.4em#1}}{\mathop{\kern 0.1em\vrule width 0.3em height 0.697ex depth -0.604ex
                \kern -0.4em \intop}\nolimits_{#1}}{\mathop{\kern 0.1em\vrule width 0.3em height 0.697ex depth -0.604ex
                \kern -0.4em \intop}\nolimits_{#1}}{\mathop{\kern 0.1em\vrule width 0.3em height 0.697ex depth -0.604ex
                \kern -0.4em \intop}\nolimits_{#1}}}

\newcommand{\kint}{\vint}

\newcommand{\ud}{\, d}

\newcommand{\dist}{\operatorname{dist}}

\title[Median porosity is QC invariant]{Median porosity is quasiconformally invariant}
\thanks{The proof development and the preparation of this note were assisted by the large language models ChatGPT and Claude.}
\author[T. Kilpel\"ainen]{Tero Kilpel\"ainen}
\address[T.K.]{University of Jyvaskyla, Department of Mathematics and Statistics, P.O. Box 35, FI-40014 University of Jyvaskyla, Finland}
\email{tero.kilpelainen@jyu.fi}

\author[A.V.\! V\"ah\"akangas]{Antti V. V\"ah\"akangas}
\address[A.V.V.]{University of Jyvaskyla, Department of Mathematics and Statistics, P.O. Box 35, FI-40014 University of Jyvaskyla, Finland}
\email{antti.vahakangas@iki.fi}

\makeatletter
\@namedef{subjclassname@2020}{{\mdseries 2020} Mathematics Subject Classification}
\makeatother

\keywords{Quasiconformal mappings, BMO, median porosity, weak porosity}

\subjclass[2020]{
	30C65 (42B35, 28A75)}

\begin{document}

\begin{abstract}
A set in~$\R^n$ is median porous if the
logarithm of its distance function has bounded mean oscillation.
We show that this property is preserved under quasiconformal
mappings. 
In particular, median porosity is quasiconformally invariant.
We also show that the stronger notion of weak porosity, by contrast, is not quasiconformally invariant.
\end{abstract}
\maketitle

\section{Introduction}

A set $E\subset\R^n$ is {\it porous} if every ball contains a smaller ball, with comparable radius, that does not touch $E$.
Recently, two weaker notions of porosity have been
characterized in terms of the distance function
$\delta_E=\dist(\cdot,E)$: a set~$E$ is {\it weakly porous} (Definition~\ref{def:weakly-porous})
if and only if $\delta_E^{-\alpha}\in A_1$ for some $\alpha>0$~\cite{Vasin2003,MR4773553},
and {\it median porous} (in the sense of~\cite{PU25}) if and only if
$\delta_E^{-\alpha}\in A_\infty$ for some $\alpha>0$.
These notions satisfy
\[
E\text{ is porous}\implies E\text{ is weakly porous}\implies E\text{ is median porous}\,,
\]
and both implications are strict~\cite{MR4773553,PU25}.
See also~\cite{MR4966412} for related characterizations of $A_p$ distance weights.

It follows from standard distortion theorems that entire quasiconformal mappings
preserve porosity; V\"ais\"al\"a~\cite{MR869219} extended this to quasisymmetric mappings
defined only on the porous set itself. In this note
we prove that median porosity is invariant under entire quasiconformal mappings.
We also show that weak porosity is not quasiconformally invariant (see Theorem~\ref{thm:sharpness}).

\begin{theorem}\label{thm:main}
Let $n\ge 2$ and let $f\colon\R^n\to\R^n$ be a quasiconformal mapping.
If $E\subset\R^n$ is median porous, then $f(E)$ is median porous.
\end{theorem}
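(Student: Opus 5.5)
We take as the definition that $E$ is median porous iff $\log\delta_E\in \mathrm{BMO}(\R^n)$, as in the abstract. Equivalently, $\delta_E^{-\alpha}\in A_\infty$ for some $\alpha>0$. The strategy is to use Reimann's theorem: an entire quasiconformal map $f$ induces a bounded isomorphism $u\mapsto u\circ f^{-1}$ of $\mathrm{BMO}(\R^n)$. Granting this, it suffices to compare $\log\delta_{f(E)}$ with $(\log\delta_E)\circ f^{-1}$ up to a bounded function plus a BMO function built from $f$ alone.

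\textbf{Step 1: local distortion of the distance function.} Put $g=f^{-1}$, which is quasiconformal, hence $\eta$-quasisymmetric. For $y\notin \overline{f(E)}$ let $x=g(y)$. The ball $B(y,\delta_{f(E)}(y))$ misses $f(E)$, so its image contains a ball around $x$ missing $E$. Quasisymmetry then gives, up to constants depending on $\eta$,
\[
\delta_E(g(y))\simeq L_g(y,\delta_{f(E)}(y)),
\]
where $L_g(y,r)=\sup_{|z-y|\le r}|g(z)-g(y)|$. The ratio $L_g/l_g$ is bounded, so $\ell$ and $L$ are interchangeable. The awkward feature is that the scale $r=\delta_{f(E)}(y)$ depends on $y$ itself.

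\textbf{Step 2: comparison with the Jacobian.} One would like $\log L_g(y,r)\approx \log r+\frac1n\log J_g(y)$. This is false pointwise, but it holds in an averaged sense. Write $a_g(y,r)=L_g(y,r)/r$. By Gehring's higher integrability, $J_g\in A_\infty$, so $\log J_g\in \mathrm{BMO}$. Also, $a_g(y,r)^n$ is comparable to $\frac{1}{|B(y,r)|}\int_{B(y,r)}J_g$.

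Rather than subtracting, the plan is to work directly with the $A_\infty$ characterization. The goal is to show $w=\delta_{f(E)}^{-\alpha}\in A_\infty$ for small $\alpha$, using the reverse Jensen / doubling-type characterization on each ball $B$. Pull back by $g$: change variables $\int_B w\,dy=\int_{g(B)} w(f(x))J_f(x)\,dx$. Step 1 gives $\delta_{f(E)}(f(x))\simeq \ell_f(x,\delta_E(x))$, that is, $\delta_{f(E)}\circ f\simeq L_f(\cdot,\delta_E(\cdot))$.

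\textbf{Step 3: the main obstacle.} We must show that $x\mapsto \log L_f(x,\delta_E(x))$ lies in BMO when $\log\delta_E$ does. We would handle it by a Whitney-type decomposition into cubes $Q$ with $\ell(Q)\simeq\delta_E$ on $Q$. Two facts should make this work:
\begin{itemize}
\item median porosity means that, in any ball $B$, the set where $\delta_E<\varepsilon\,r_B$ has small measure, with exponential decay in $\log(1/\varepsilon)$;
\item quasisymmetry gives $L_f(x,t)/L_f(x,r)\ge c(t/r)^{\beta}$ for $t<r$, with a polynomial bound in the other direction as well.
\end{itemize}
Hence $|\log L_f(x,\delta_E(x))-\log L_f(x_B,r_B)|\lesssim 1+|\log(\delta_E(x)/r_B)|$ for $x\in B$, whenever $\delta_E(x)\lesssim r_B$. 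Points with $\delta_E(x)\gg r_B$ have $\delta_E$ nearly constant on $B$ and are handled by the same estimate.

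This yields the John--Nirenberg-type exponential distribution for $\log\delta_{f(E)}\circ f$ from that of $\log\delta_E$, so $\log(\delta_{f(E)}\circ f)\in\mathrm{BMO}$. Applying Reimann's theorem with $f^{-1}$ then gives $\log\delta_{f(E)}\in\mathrm{BMO}$, which is the claim.

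The delicate points are the uniform constants and the subtraction of the reference value. Here one must use exponential integrability rather than mere boundedness of the oscillation. For that we would invoke the John--Nirenberg inequality in the form: $u\in\mathrm{BMO}$ iff $|\{x\in B:|u-c_B|>\lambda\}|\le Ce^{-c\lambda}|B|$ for some constants $c_B$.
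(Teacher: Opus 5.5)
There is a genuine gap in Step~3. Your architecture is sound: Step~1, $\delta_{f(E)}(f(x))\simeq L_f(x,\delta_E(x))$ by quasisymmetry, reduces the problem to showing $x\mapsto\log L_f(x,\delta_E(x))$ lies in $\mathrm{BMO}$, and Reimann's theorem for $f^{-1}$ then finishes. The trouble is your first bullet, which is not a property of median porous sets. Suppose $|\{x\in B:\delta_E(x)<\varepsilon r_B\}|\le C\varepsilon^{c}|B|$ held for all balls $B$ and all $\varepsilon\in(0,1)$. Choose $\varepsilon_0$ with $C\varepsilon_0^{c}<1$ and apply the bound to $\frac12 B$. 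This gives a point $x\in\frac12 B$ with $B(x,\varepsilon_0 r_B/2)\subset B\setminus E$, so $E$ would be \emph{porous}, which is strictly stronger than median porosity.

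Concretely, take $F=\bigcup_m\partial B(0,m^\gamma)$ from Section~4 and a unit ball $B$ centred on $\partial B(0,m^\gamma)$. Then $\delta_F\lesssim m^{\gamma-1}$ on $B$, so the mean of $\log(r_B/\delta_F)$ over $B$ is at least $(1-\gamma)\log m-C\to\infty$. Hence your pointwise bound $|\log L_f(x,\delta_E(x))-\log L_f(x_B,r_B)|\lesssim 1+|\log(\delta_E(x)/r_B)|$, although correct, has no uniformly bounded average over $B$. What $\log\delta_E\in\mathrm{BMO}$ controls is $\log(\delta_E/\rho_B)$ with $\rho_B=\exp\bigl((\log\delta_E)_B\bigr)$, the median scale of $\delta_E$ on $B$. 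This scale can be far below $r_B$. As written, your argument works only when $E$ is porous.

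Recentring at $\rho_B$ repairs half of this. By your power-type quasisymmetry bounds, $|\log L_f(x,\delta_E(x))-\log L_f(x,\rho_B)|\le C(1+|\log(\delta_E(x)/\rho_B)|)$, and its average over $B$ is at most $C(1+\|\log\delta_E\|_{\mathrm{BMO}})$. You then still need $x\mapsto\log L_f(x,\rho)$ to have mean oscillation on $B$ bounded \emph{independently of $\rho$}, including when $\rho\ll r_B$. The distortion estimates you list only give $C(1+\log(r_B/\rho))$ there.

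The missing ingredient is the Jacobian input you set aside after Step~2, in three pieces:
\begin{enumerate}[(i)]
\item $n\log L_f(x,\rho)=n\log\rho+\log(J_f)_{B(x,\rho)}+O(1)$.
\item The reverse Jensen inequality for the $A_\infty$ weight $J_f$ replaces $\log(J_f)_{B(x,\rho)}$ by $\kint_{B(x,\rho)}\log J_f$ up to $O(1)$.
\item Fixed-radius averaging maps $\mathrm{BMO}$ to $\mathrm{BMO}$ with norm at most $1$ (Lemma~\ref{lem:P}), and $\log J_f\in\mathrm{BMO}$.
\end{enumerate}
This is essentially the paper's route. The Astala--Gehring theorem gives $\log\delta_{f(E)}\circ f=\log\delta_E+\frac1n A_E(\log J_f)+O(1)$. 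Theorem~\ref{thm:main_averaging} then proves $A_E(\log J_f)\in\mathrm{BMO}$ by comparing $A_E v(x)$ with $A_\rho v(x)$ at $\rho=e^{(\log\delta_E)_B}$, rather than at the radius of $B$.
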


The median porosity
of $E$ is equivalent to
$\log\delta_E\in\mathrm{BMO}(\R^n)$ by the standard characterization of $A_\infty$; see \cite{MR807149}
and \cite[Theorem 1.5]{PU25}.
Thus Theorem~\ref{thm:main} reduces to showing that this property is
preserved under quasiconformal mappings; we will establish this in
Theorem~\ref{thm:QC_invariance}, which is a quantitative variant of Theorem~\ref{thm:main}.
We will employ Reimann's $\mathrm{BMO}$ isomorphism theorem~\cite{MR361067}, which yields
$\log\delta_E\circ f^{-1}\in\mathrm{BMO}$. In general this function does not
coincide with $\log\delta_{f(E)}$. However, the resulting defect
\[
\log\delta_{f(E)}-\log\delta_E\circ f^{-1}
\] is shown to lie in
$\mathrm{BMO}$ by using Astala--Gehring's quasiconformal Koebe distortion
theorem~\cite{MR777305} together with a variable-radius averaging theorem
for BMO. The last result is established in Section~2
(Theorem~\ref{thm:main_averaging}) and applied in Section~3 to complete
the proof of Theorem~\ref{thm:main}.

\section{BMO averaging theorem}

We start {by} proving a variable-radius averaging theorem for BMO (Theorem~\ref{thm:main_averaging}), applied in Section~3 to establish Theorem~\ref{thm:main}.

For $u\in L^1_{\mathrm{loc}}(\R^n)$, we denote its {\it $\mathrm{BMO}$ seminorm} by
\[
\|u\|_{\mathrm{BMO}}=\sup_B\,\kint_B |u(x)-u_B|\ud x\,,
\]
where $u_B=\kint_B u(y)\ud y$ and the supremum is taken over all balls $B\subset\R^n$.
We denote $u\in \mathrm{BMO}=\mathrm{BMO}(\R^n)$ if $\|u\|_{\mathrm{BMO}}<\infty$
and $u\in L^1_{\mathrm{loc}}(\R^n)$.
We refer to~\cite{MR807149,MR1232192} for background on BMO and related topics.

\begin{theorem}[John--Nirenberg inequality {\cite{MR131498}}]\label{thm:JN}
Let $u\in\mathrm{BMO}$. Then for all balls $B\subset\R^n$ and all $\lambda\ge 0$,
\[
|\{x\in B:|u(x)-u_B|>\lambda\}|
\le c_1e^{-(c_2/\|u\|_{\mathrm{BMO}})\,\lambda} \cdot |B|\,,
\]
where the constants $c_1,c_2>0$ depend only on the dimension $n$.
\end{theorem}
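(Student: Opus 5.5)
We prove the John--Nirenberg inequality (Theorem~\ref{thm:JN}) by a Calder\'on--Zygmund stopping-time argument. This yields a geometric decay of the distribution function at levels spaced by a fixed multiple of $\|u\|_{\mathrm{BMO}}$. By homogeneity we may assume $\|u\|_{\mathrm{BMO}}=1$, replacing $u$ by $u/\|u\|_{\mathrm{BMO}}$; the case $\|u\|_{\mathrm{BMO}}=0$ is trivial, since then $u$ is a.e.\ constant. Because the stopping-time argument is cleanest on cubes, we first work with a cube $Q$ and then pass to balls. A ball $B$ is contained in a concentric cube $Q$ with $|Q|\le c(n)|B|$. Moreover $|u_B-u_Q|\le c(n)$ and the cube-BMO seminorm is at most $c(n)\|u\|_{\mathrm{BMO}}$, since each cube sits inside a ball of comparable measure. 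So a cube estimate
\[
|\{x\in Q:|u-u_Q|>\lambda\}|\le C e^{-c\lambda}|Q|
\]
transfers to balls after adjusting the constants $c_1,c_2$. For $\lambda\le c(n)$ the inequality is trivial once $c_1$ is chosen large.

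\textbf{The key step on a fixed cube $Q_0$.} Apply the Calder\'on--Zygmund decomposition to $|u-u_{Q_0}|$ on $Q_0$ at height $2^{n}\cdot 2$ (any fixed $\alpha>1$ works). This produces maximal dyadic subcubes $Q_j$ with the following properties:
\begin{itemize}
\item[(i)] $2^{n+1}<\kint_{Q_j}|u-u_{Q_0}|\le 2^{2n+1}$;
\item[(ii)] $|u-u_{Q_0}|\le 2^{n+1}$ a.e.\ outside $\bigcup Q_j$;
\item[(iii)] $\sum|Q_j|\le 2^{-n-1}\int_{Q_0}|u-u_{Q_0}|\le \tfrac12|Q_0|$.
\end{itemize}
From (i) we get $|u_{Q_j}-u_{Q_0}|\le 2^{2n+1}$. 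Set $F(\lambda)=\sup_Q |\{x\in Q:|u-u_Q|>\lambda\}|/|Q|$, the supremum taken over all cubes. For $\lambda> 2^{n+1}$, property (ii) gives
\[
|\{x\in Q_0:|u-u_{Q_0}|>\lambda\}|\le\sum_j|\{x\in Q_j:|u-u_{Q_j}|>\lambda-2^{2n+1}\}|\le F(\lambda-2^{2n+1})\sum_j|Q_j|.
\]
Combining this with (iii) yields the recursion $F(\lambda)\le \tfrac12F(\lambda-A)$ with $A=2^{2n+1}$.

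\textbf{Conclusion and the main obstacle.} Iterating from the trivial bound $F\le1$ gives $F(\lambda)\le 2^{-\lfloor\lambda/A\rfloor}\le 2e^{-(\log 2/A)\lambda}$, which is the claim with constants depending only on $n$. The only delicate point is that the iteration needs $F$ to be finite and defined uniformly over all cubes. This holds trivially, since $F\le1$, so no a priori integrability beyond $L^1_{\mathrm{loc}}$ is needed.

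Thus the expected obstacle is mild. It is the bookkeeping in transferring between balls and cubes, and keeping the constants dependent only on $n$ and correctly rescaled: after rescaling, the exponent becomes $c_2\lambda/\|u\|_{\mathrm{BMO}}$.
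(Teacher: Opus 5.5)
The paper does not prove this statement. It imports it as a black box from John and Nirenberg's paper, and later uses only its shape (exponential decay at a rate proportional to $1/\|u\|_{\mathrm{BMO}}$, with dimensional constants) in deriving \eqref{eq:sublevel} in the proof of Theorem~\ref{thm:main_averaging}.

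You give instead the standard self-contained Calder\'on--Zygmund stopping-time proof. It runs through the supremum function $F$, so that one decomposition plus the recursion $F(\lambda)\le\frac12F(\lambda-A)$ replaces tracking successive generations of cubes. The argument is correct:
\begin{itemize}
\item properties (i)--(iii) and the bound $|u_{Q_j}-u_{Q_0}|\le 2^{2n+1}$ hold;
\item the recursion is valid for $\lambda>2^{n+1}$, and since $A>2^{n+1}$ this covers all $\lfloor\lambda/A\rfloor$ iteration steps;
\item the remark that $F\le 1$ needs no a priori integrability is right;
\item the ball--cube transfer is fine.
\end{itemize}
Compared with the citation, your route gives explicit dimensional constants, at the cost of length the note does not need.

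There is one bookkeeping slip. The last inequality in (iii) requires $\kint_{Q_0}|u-u_{Q_0}|\le 2^n$. After normalizing the ball seminorm $\|u\|_{\mathrm{BMO}}=1$, you only know that the cube mean oscillation is at most your unspecified constant $c(n)$, which is not shown to be $\le 2^n$. Normalize the cube seminorm $\sup_Q\kint_Q|u-u_Q|$ to $1$ instead; homogeneity allows this equally well. Then (iii) even gives $\sum_j|Q_j|\le 2^{-n-1}|Q_0|$. Converting back to the ball seminorm at the end, via $\sup_Q\kint_Q|u-u_Q|\le c(n)\|u\|_{\mathrm{BMO}}$, changes $c_1$ and $c_2$ only by dimensional factors.
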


 The following two elementary lemmas are known. Nevertheless, for the reader's convenience, we include short proofs. See also~\cite{MR807149,MR1232192}.

\begin{lemma}\label{lem:P}
Let $u\in\mathrm{BMO}$ and $\rho>0$.
Define
\[
A_\rho u(x)=\kint_{B(x,\rho)} u(y)\ud y\,,\quad x\in\R^n\,.
\]
Then $A_\rho u\in\mathrm{BMO}$ and $
\|A_\rho u\|_{\mathrm{BMO}}\le \|u\|_{\mathrm{BMO}}$.
\end{lemma}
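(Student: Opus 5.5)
The idea is to write $A_\rho u$ as a superposition of translates of $u$. Since BMO seminorms are translation invariant and convex, the averaging operator will not increase the seminorm. Substituting $y=x+z$ gives
\[
A_\rho u(x)=\kint_{B(0,\rho)} u(x+z)\ud z=\kint_{B(0,\rho)} \tau_z u(x)\ud z,
\]
where $\tau_z u(x)=u(x+z)$. Each translate satisfies $\|\tau_z u\|_{\mathrm{BMO}}=\|u\|_{\mathrm{BMO}}$, because translation maps balls to balls of the same measure.

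First we check that $A_\rho u$ is well defined and locally integrable. Since $u\in L^1_{\mathrm{loc}}$, the function $A_\rho u$ is finite everywhere. For any ball $B=B(x_0,r)$, Fubini gives
\[
\int_B |A_\rho u|\le \int_{B(x_0,r+\rho)}|u|<\infty.
\]
Hence $A_\rho u\in L^1_{\mathrm{loc}}$. It is in fact continuous, by continuity of translation in $L^1$ on compact sets, though we do not need this.

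Next fix a ball $B$. By Fubini (justified since $u$ is integrable on $B+B(0,\rho)$),
\[
(A_\rho u)_B=\kint_{B(0,\rho)} (\tau_z u)_B\ud z=\kint_{B(0,\rho)} u_{B+z}\ud z.
\]
Therefore
\[
A_\rho u(x)-(A_\rho u)_B=\kint_{B(0,\rho)}\bigl(u(x+z)-u_{B+z}\bigr)\ud z.
\]
Taking absolute values, moving them inside the $z$-integral, averaging over $x\in B$ and applying Tonelli yields
\[
\kint_B |A_\rho u-(A_\rho u)_B|\ud x\le \kint_{B(0,\rho)}\kint_{B+z}|u(y)-u_{B+z}|\ud y\ud z\le \|u\|_{\mathrm{BMO}}.
\]
Taking the supremum over $B$ gives the claim.

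The only point that needs care is justifying the interchanges of integration. This is routine: every integrand is absolutely integrable over the compact set $\{(x,z): x\in \overline B,\ |z|\le\rho\}$, because $u$ is integrable on the bounded set $B+B(0,\rho)$. I do not expect any genuine obstacle beyond this bookkeeping.
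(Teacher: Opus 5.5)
Your proposal is correct and follows essentially the same route as the paper: you write $A_\rho u$ as an average of translates, identify $(A_\rho u)_B=\kint_{B(0,\rho)}u_{B+z}\ud z$ via Fubini, and bound the oscillation by the $z$-average of the oscillations of $u$ over the translated balls $B+z$. The only minor difference is that you check $A_\rho u\in L^1_{\mathrm{loc}}$ with an explicit Fubini estimate instead of appealing to continuity, which is equally valid.
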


\begin{proof}
By continuity, we have $A_\rho u\in L^1_{\mathrm{loc}}(\R^n)$.
The change of variable $y=x+w$ gives
\[
A_\rho u(x)=\kint_{B(0,\rho)} u(x+w)\ud w\,.
\]
Fix a ball $B\subset\R^n$. By Fubini's theorem and a further change of variable $z=x+w$,
\[
(A_\rho u)_B
=\kint_{B(0,\rho)}\kint_B u(x+w)\ud x\ud w
=\kint_{B(0,\rho)} u_{B+w}\ud w\,.
\]
The triangle inequality and Fubini's theorem therefore yield
\begin{equation}\label{eq:est1}
\kint_B|A_\rho u(x)-(A_\rho u)_B|\ud x
\le
\kint_{B(0,\rho)}\kint_B|u(x+w)-u_{B+w}|\ud x\ud w\,.
\end{equation}
The same change of variable $z=x+w$ identifies the inner integral as the
oscillation of $u$ over the translated ball $B+w$:
\[
\kint_B|u(x+w)-u_{B+w}|\ud x
=\kint_{B+w}|u(z)-u_{B+w}|\ud z
\le\|u\|_{\mathrm{BMO}}\,.
\]
Combining this with \eqref{eq:est1}, we arrive at
\[
\kint_B|A_\rho u(x)-(A_\rho u)_B|\ud x\le\|u\|_{\mathrm{BMO}}\,,
\]
as desired.
\end{proof}

\begin{lemma}\label{lem:C}
Let $u\in\mathrm{BMO}$ and $0<r,R<\infty$. Then
\begin{equation}\label{e.tavoite}
|A_r u(x)-A_R u(x)|\le C\Bigl(1+\Bigl|\log\frac{R}{r}\Bigr|\Bigr)\|u\|_{\mathrm{BMO}}\,,
\end{equation}
for all $x\in\R^n$; here constant $C$ depends only on the dimension $n$.
\end{lemma}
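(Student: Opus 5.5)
By symmetry we may assume $r\le R$. The plan is the standard telescoping argument over dyadic scales. The only estimate needed is a comparison of averages over two concentric balls whose radii are within a factor $2$ of each other.

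\emph{Basic comparison.} Let $B'\subset B$ be concentric balls with radii $s'\le s\le 2s'$, so that $|B|\le 2^n|B'|$. Then
\[
|u_{B'}-u_B|\le \kint_{B'}|u-u_B|\ud y\le \frac{|B|}{|B'|}\kint_B|u-u_B|\ud y\le 2^n\|u\|_{\mathrm{BMO}}\,.
\]
Since $A_\rho u(x)=u_{B(x,\rho)}$, this bounds the difference of the averages at the two radii.

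\emph{Telescoping.} Let $k\ge 0$ be the integer with $2^k r\le R<2^{k+1}r$, so that $k\le \log_2(R/r)$. Set $\rho_j=2^j r$ for $j=0,\dots,k$. Then
\[
|A_r u(x)-A_R u(x)|\le \sum_{j=0}^{k-1}|u_{B(x,\rho_j)}-u_{B(x,\rho_{j+1})}|+|u_{B(x,\rho_k)}-u_{B(x,R)}|\,.
\]
Each of these $k+1$ terms compares concentric balls with radius ratio at most $2$, so by the basic comparison each is at most $2^n\|u\|_{\mathrm{BMO}}$. Summing gives
\[
|A_r u(x)-A_R u(x)|\le 2^n(k+1)\|u\|_{\mathrm{BMO}}\le 2^n\Bigl(1+\frac{1}{\log 2}\log\frac{R}{r}\Bigr)\|u\|_{\mathrm{BMO}}\,.
\]
This is \eqref{e.tavoite} with $C=2^n/\log 2$.

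There is no real obstacle. The only points of care are handling the leftover last step, where the radius ratio is less than $2$ rather than exactly $2$, and noting that the constant depends only on $n$.
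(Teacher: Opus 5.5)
Your proposal is correct and follows essentially the same route as the paper: both telescope over geometrically growing radii and bound each step by comparing averages over concentric balls with bounded radius ratio. The paper uses ratio $e$ (yielding $C=e^n$) and only asserts the step estimate \eqref{eq:local-doubling}, while you use ratio $2$ (yielding $C=2^n/\log 2$) and verify the step estimate explicitly.
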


\begin{proof}
It is straightforward to show that
\begin{equation}\label{eq:local-doubling}
|A_{\rho_1}u(x)-A_{\rho_2}u(x)|\le \left(\frac{\rho_2}{\rho_1}\right)^n\|u\|_{\mathrm{BMO}}\,,
\end{equation}
for every $x\in\R^n$ and every pair of radii $0<\rho_1\le \rho_2$.

By symmetry of both sides of \eqref{e.tavoite} in $r$ and $R$, we may assume $0<r\le R$.
Let $N\ge 1$ be the unique integer satisfying
\[
e^{N-1}r\le R<e^N r\,.
\]
Then $N\le 1 +\log(R/r)$.

Letting $r_k=e^k r$ for $k=0,\ldots,N-1$ and $r_N=R$, these radii obey
\[1\le r_k/r_{k-1}\le e\,,\quad k=1,\ldots,N.\]
Thus using telescoping summation and~\eqref{eq:local-doubling} for consecutive radii $r_k$, we obtain
\begin{align*}
|A_r u(x)-A_R u(x)|
&\le\sum_{k=1}^{N}|A_{r_{k-1}}u(x)-A_{r_k}u(x)|
\\&\le N\cdot e^n\|u\|_{\mathrm{BMO}}
\le e^n\Bigl(1+\log\frac{R}{r}\Bigr)\|u\|_{\mathrm{BMO}}\,.
\end{align*}
This proves the lemma with $C=e^n$.
\end{proof}

Throughout the note, we abbreviate $\delta_E=\dist(\cdot ,E)$ whenever $ \emptyset\ne E\subset \R^n$.
Observe that $\log\delta_E\in\mathrm{BMO}$ implies $\log\delta_E\in L^1_{\mathrm{loc}}(\R^n)$.
 Hence $\delta_E>0$ almost everywhere; in particular $\lvert \iol{E}\rvert=0$.

Lemma~\ref{lem:P} shows that fixed-radius averaging preserves BMO.
The following theorem extends this to variable-radius averaging,
where the radius at each point is given by the distance to a set.
Variable-step mollifiers and variable-radius averaging operators of this
kind have a long history in the Sobolev and trace-theory literature; see
Burenkov~\cite{MR1622690} and Shan\cprime kov~\cite{MR816512}.
The following result might be known, but we were not able to locate it in the literature.

\begin{theorem}\label{thm:main_averaging}
Assume that $\emptyset\ne E\subset\R^n$ is such that
$\log\delta_E\in\mathrm{BMO}$.
Let $v\in\mathrm{BMO}$ and define
\[
A_E v(x)=\kint_{B(x,\,\delta_E(x))} v(y)\ud y\,,\quad x\in\R^n\setminus \iol{E}\,.
\]
Then $A_E v\in\mathrm{BMO}$ and
\[
\|A_E v\|_{\mathrm{BMO}}
\le C(n,\|\log\delta_E\|_{\mathrm{BMO}})\,\|v\|_{\mathrm{BMO}}\,.
\]
\end{theorem}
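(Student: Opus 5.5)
The plan is to compare $A_E v$ on each ball with a fixed-radius average $A_\rho v$, where the radius $\rho$ is adapted to the ball through the mean of $\log\delta_E$. Lemma~\ref{lem:P} controls the oscillation of $A_\rho v$, and Lemma~\ref{lem:C} controls the pointwise error $|A_E v-A_\rho v|$.

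First, the preliminaries. Since $\log\delta_E\in\mathrm{BMO}$, we have $|\iol{E}|=0$, so $A_E v$ is defined almost everywhere. On the open set $\R^n\setminus\iol{E}$ the function $x\mapsto A_E v(x)$ is continuous, because both the center and the radius $\delta_E(x)>0$ vary continuously and $v\in L^1_{\mathrm{loc}}$. Hence $A_E v$ is measurable. Local integrability will follow from the estimate below, since that estimate bounds $\kint_B|A_E v-c_B|$ by a finite quantity for every ball $B$ and a suitable constant $c_B$.

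Fix a ball $B$ and set $\rho_B=\exp\bigl((\log\delta_E)_B\bigr)\in(0,\infty)$. This choice is the key point. The natural first guess is the radius $r$ of $B$, but it fails. When $B$ lies far from $E$, $\log\delta_E$ is much larger than $\log r$ on $B$. When $E$ is very dense near $B$, it is much smaller. With $\rho_B$, by contrast, the BMO hypothesis gives directly
\[
\kint_B\bigl|\log\delta_E(x)-\log\rho_B\bigr|\ud x\le\|\log\delta_E\|_{\mathrm{BMO}}\,.
\]
For a.e.\ $x\in B$, Lemma~\ref{lem:C} with $r=\delta_E(x)$ and $R=\rho_B$ yields
\[
|A_E v(x)-A_{\rho_B}v(x)|\le C\bigl(1+|\log\delta_E(x)-\log\rho_B|\bigr)\|v\|_{\mathrm{BMO}}\,.
\]
Averaging over $B$ then gives
\[
\kint_B|A_E v-A_{\rho_B}v|\le C\bigl(1+\|\log\delta_E\|_{\mathrm{BMO}}\bigr)\|v\|_{\mathrm{BMO}}\,.
\]

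Next, with $c_B=(A_{\rho_B}v)_B$, Lemma~\ref{lem:P} gives $\kint_B|A_{\rho_B}v-c_B|\le\|v\|_{\mathrm{BMO}}$. This holds regardless of whether $\rho_B$ is larger or smaller than the radius of $B$. By the triangle inequality,
\[
\kint_B|A_E v-c_B|\le \bigl(C+C\|\log\delta_E\|_{\mathrm{BMO}}+1\bigr)\|v\|_{\mathrm{BMO}}\,.
\]
Finally, the standard estimate $\kint_B|w-w_B|\le 2\kint_B|w-c|$, valid for any constant $c$, gives the claimed bound with $C(n,\|\log\delta_E\|_{\mathrm{BMO}})=2\bigl(1+e^n(1+\|\log\delta_E\|_{\mathrm{BMO}})\bigr)$.

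The main obstacle is choosing the comparison radius. Once $\rho_B$ is taken to be the geometric mean of $\delta_E$ over $B$, the logarithmic loss in Lemma~\ref{lem:C} is exactly what the BMO norm of $\log\delta_E$ controls, and the rest is routine.
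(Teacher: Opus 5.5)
Your proof is correct. It follows the paper's skeleton: the same comparison radius $\rho_B=\exp\bigl((\log\delta_E)_B\bigr)$, Lemma~\ref{lem:C} for the pointwise error, and Lemma~\ref{lem:P} with the triangle inequality for the oscillation of $A_{\rho_B}v$.

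The one genuine difference is how you bound $\kint_B|A_Ev-A_{\rho_B}v|$. The paper splits $B\setminus E$ into layers $L_k$ on which $k<\log(\delta_E/\rho)\le k+1$. It then uses the John--Nirenberg inequality (Theorem~\ref{thm:JN}) to get $|L_k|\le Ce^{-c|k|}|B|$ and sums $\sum_k(2+|k|)e^{-c|k|}$. You instead note that the loss in Lemma~\ref{lem:C} is linear in $|\log\delta_E(x)-\log\rho_B|$. Its average over $B$ is therefore bounded directly by the definition of $\|\log\delta_E\|_{\mathrm{BMO}}$. Indeed, the paper's layer sum is itself dominated by $\kint_B\bigl(3+|\log\delta_E-(\log\delta_E)_B|\bigr)$, so the exponential decay from John--Nirenberg is more than is needed.

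What your route buys:
\begin{itemize}
\item It is shorter and dispenses with John--Nirenberg.
\item It no longer needs the paper's remark that $\|\log\delta_E\|_{\mathrm{BMO}}>0$, which is used only to define $c_3$.
\item It gives the explicit constant $2\bigl(1+e^n(1+\|\log\delta_E\|_{\mathrm{BMO}})\bigr)$, linear in $\|\log\delta_E\|_{\mathrm{BMO}}$. The paper's constant instead depends on it through the John--Nirenberg constants.
\end{itemize}
The layer argument would only be needed if the pointwise loss grew faster than linearly in $|\log(\delta_E/\rho)|$. You also verify measurability and local integrability of $A_Ev$, which the paper leaves implicit.
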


\begin{proof}
Clearly, we may assume that $E$ is closed.
Then
$\lVert \log \delta_E\rVert_{\mathrm{BMO}}>0$. 
Denote $u=\log \delta_E$ and
fix a ball $B\subset\R^n$.
We write
\[
a=u_B=\kint_B u(y)\ud y\in\R\quad \text{ and }\quad \rho=e^a>0.
\]
For $x\in B\setminus E$, we have
\[
u(x)-a=\log\frac{\delta_E(x)}{\rho}\,.
\]
Define the layers
\[
L_k=\{x\in B\setminus E: e^{k}\rho<\delta_E(x)\le e^{k+1}\rho\}\,,\quad k\in\Z\,.
\]
Since $E$ is closed, these layers form a disjoint cover of
$B\setminus E$.

Next we show that
\begin{equation}\label{eq:Lk_measure}
|L_k|\le C\,e^{-c\,|k|}\,|B|\qquad\text{for all }k\in\Z\,,
\end{equation}
where $C=C(n,\|u\|_{\mathrm{BMO}})$ and $c=c(n,\|u\|_{\mathrm{BMO}})>0$.

Let $c_1$ and $c_2$ be the constants from the John--Nirenberg inequality
(Theorem~\ref{thm:JN}), and set $c_3=c_2/(2\lVert u\rVert_{\mathrm{BMO}})>0$.
Fix $\lambda>0$. For every $x\in B\setminus E$,
\begin{align*}
\delta_E(x)\le e^{-\lambda}\rho &\implies u(x)-a\le-\lambda\,,\\
\delta_E(x)> e^{\lambda}\rho &\implies u(x)-a>\lambda\,;
\end{align*}
in either case $|u(x)-a|\ge\lambda$, so $x\in\{y\in B:|u(y)-a|>\lambda/2\}$.
The John--Nirenberg inequality therefore yields
\begin{equation}\label{eq:sublevel}
|\{x\in B\setminus E:\delta_E(x)\le e^{-\lambda}\rho\}|
+|\{x\in B\setminus E:\delta_E(x)> e^{\lambda}\rho\}|
\le c_1\,e^{-c_3\lambda}\,|B|\,,
\end{equation}
for all $\lambda>0$. We deduce~\eqref{eq:Lk_measure} from~\eqref{eq:sublevel} in three cases:
\begin{itemize}
\item For $|k|\le 1$ the bound~\eqref{eq:Lk_measure} is immediate:
$|L_k|\le|B|\le e^{c_3}\,e^{-c_3|k|}\,|B|$.
\item For $k\ge 2$, the inclusion
$L_k\subset\{x\in B\setminus E:\delta_E(x)>e^{k}\rho\}$
and~\eqref{eq:sublevel} at $\lambda=k$ give
$|L_k|\le c_1\,e^{-c_3 \lvert k\rvert}\,|B|$.
\item For $k\le-2$, the inclusion
$L_k\subset\{x\in B\setminus E:\delta_E(x)\le e^{-(|k|-1)}\rho\}$
and~\eqref{eq:sublevel} at $\lambda=|k|-1$ give
$|L_k|\le c_1\,e^{c_3}\,e^{-c_3|k|}\,|B|$.
\end{itemize}
Taking $C=e^{c_3}\max\{1,c_1\}$ and $c=c_3$ proves~\eqref{eq:Lk_measure}.

If $x\in L_k$, then
\[
k<\log\frac{\delta_E(x)}{\rho}\le k+1,
\]
whence
\[
\left|\log\frac{\delta_E(x)}{\rho}\right|\le |k|+1.
\]
Lemma~\ref{lem:C}, applied with radii $\delta_E(x)$ and $\rho$, gives
\begin{equation}\label{eq:Lk_pointwise}
|A_E v(x)-A_\rho v(x)|
\le C(n)\Bigl(1+\Bigl|\log\frac{\delta_E(x)}{\rho}\Bigr|\Bigr)\|v\|_{\mathrm{BMO}}
\le C(n)(2+|k|)\,\|v\|_{\mathrm{BMO}}
\end{equation}
for all $x\in L_k$.

Combining~\eqref{eq:Lk_measure} and~\eqref{eq:Lk_pointwise}, and using the fact that $E$ has measure zero, we have
\begin{align*}
\kint_B|A_E v(x)-A_\rho v(x)|\ud x
&=\frac{1}{|B|}\sum_{k\in\Z}\int_{L_k}|A_E v(x)-A_\rho v(x)|\ud x\\
&\le C(n,\|u\|_{\mathrm{BMO}})\,\|v\|_{\mathrm{BMO}}
\sum_{k\in\Z}(2+|k|)\,e^{-c\,|k|}\\
&\le C(n,\|u\|_{\mathrm{BMO}})\,\|v\|_{\mathrm{BMO}}\,.
\end{align*}

By the above estimate and Lemma \ref{lem:P},
\begin{align*}
&\kint_B|A_E v(x)-(A_E v)_B|\ud x
\le 2\kint_B|A_E v(x)-(A_\rho v)_B|\ud x\\
&\qquad\le 2\kint_B|A_E v(x)-A_\rho v(x)|\ud x
+2\kint_B|A_\rho v(x)-(A_\rho v)_B|\ud x\\
&\qquad\le C(n,\|u\|_{\mathrm{BMO}})\,\|v\|_{\mathrm{BMO}}\,.
\end{align*}
The claim follows by taking supremum over all balls $B$ in $\R^n$.
\end{proof}

\section{Invariance under quasiconformal mappings}

A homeomorphism $f\colon D\to D'$ between domains $D$ and $D'$ of $\R^n$
is a {\it $K$-qua\-si\-con\-for\-mal mapping} if $f\in W^{1,n}_{\mathrm{loc}}(D,\R^n)$ and
\[|Df(x)|^n\le K\,J_f(x)\quad \text{ for almost every } x\in D\,; \]
we refer to~\cite{MR454009,MR1859913} for background.
In particular $J_f\ge 0$ almost everywhere.

We shall use two results from the theory of quasiconformal mappings. The first is Reimann's $\mathrm{BMO}$ isomorphism theorem.

\begin{theorem}[Reimann {\cite{MR361067}}]\label{thm:Reimann}
Let $f\colon\R^n\to\R^n$ be a $K$-quasiconformal mapping. Then:
\begin{enumerate}[\upshape(a)]
\item
The composition operator $u\mapsto u\circ f^{-1}$ is a bounded
isomorphism of\/ $\mathrm{BMO}$:
\[
\|u\circ f^{-1}\|_{\mathrm{BMO}}
\le C\,\|u\|_{\mathrm{BMO}}
\qquad\text{for all }u\in\mathrm{BMO}\,,
\]
where $C=C(K,n)$.
\item
$\log J_f\in\mathrm{BMO}$ with
$\|\log J_f\|_{\mathrm{BMO}}\le C(K,n)$.
\end{enumerate}
\end{theorem}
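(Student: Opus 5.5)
The plan is to derive both parts from two standard facts about $K$-quasiconformal maps of $\R^n$. The first is \emph{quasisymmetry}, i.e.\ bounded linear distortion: there is $H=H(K,n)\ge 1$ such that for every ball $B'$ we can find $x$ and $r>0$ with
\[
B(x,r)\subset f^{-1}(B')\subset B(x,Hr)\,,
\]
and moreover $|f(B(x,Hr))|\le C(K,n)\,|f(B(x,r))|$. The second is \emph{Gehring's higher integrability} (reverse Hölder inequality) for the Jacobian: there are $p=p(K,n)>1$ and $C=C(K,n)$ such that
\[
\Bigl(\kint_B J_f^p\Bigr)^{1/p}\le C\kint_B J_f = C\,\frac{|f(B)|}{|B|}
\]
for all balls $B$. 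I will take both as known from the theory in~\cite{MR454009,MR1859913}, together with Gehring's lemma.

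For part (b), the reverse Hölder inequality says exactly that $J_f\in A_\infty$ with constants depending only on $K$ and $n$. The standard characterization of $A_\infty$ weights~\cite{MR807149} then gives $\log J_f\in\mathrm{BMO}$ with norm bounded by $C(K,n)$. In more detail, reverse Hölder gives a uniform bound on $\kint_B J_f\cdot\exp\bigl(-\kint_B\log J_f\bigr)$. A Jensen-type argument applied to $J_f^{\varepsilon}$ and $J_f^{-\varepsilon}$ then bounds the mean oscillation of $\log J_f$.

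For part (a), fix a ball $B'$ and choose $x,r$ as above; write $B=B(x,Hr)$ and $c=u_B$. Using $|B'|\ge|f(B(x,r))|\ge c'|f(B)|$ and the change of variables $y=f(z)$,
\[
\kint_{B'}|u\circ f^{-1}-c|\ud y\le \frac{C}{|f(B)|}\int_{B}|u-c|\,J_f\ud z\,.
\]
Apply Hölder's inequality with exponents $p'$ and $p$, where $p$ is Gehring's exponent. The $J_f$ factor becomes $(\kint_B J_f^p)^{1/p}\le C|f(B)|/|B|$. The $u$ factor is $(\kint_B|u-u_B|^{p'})^{1/p'}$, which is at most $C(n,p)\|u\|_{\mathrm{BMO}}$ by the John--Nirenberg inequality (Theorem~\ref{thm:JN}) after integrating the distribution function. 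The factors $|f(B)|$ cancel, leaving $C(K,n)\|u\|_{\mathrm{BMO}}$. Since $\kint_{B'}|w-w_{B'}|\le 2\kint_{B'}|w-c|$ for any constant $c$, this gives the bound on $\|u\circ f^{-1}\|_{\mathrm{BMO}}$. Local integrability of $u\circ f^{-1}$ follows from the same computation. Applying the result to $f^{-1}$, which is also $K'$-quasiconformal with $K'=K'(K,n)$, shows the operator is an isomorphism.

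The main obstacle is the reverse Hölder inequality for $J_f$, which is Gehring's deep higher-integrability theorem. I would not reprove it but invoke it, with $p$ and $C$ depending only on $(K,n)$. The quasiball property and the measure-doubling $|f(B(x,Hr))|\lesssim|f(B(x,r))|$ follow from the metric definition of quasiconformality via quasisymmetry (Väisälä), which is also classical.
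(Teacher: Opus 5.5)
Your proof is correct, but there is nothing in the paper to compare it against: the paper does not prove this theorem and simply imports it from Reimann as a black box. What you have written is essentially the classical argument behind that citation. Gehring's reverse H\"older inequality for $J_f$, together with quasisymmetry, makes $J_f$ an $A_\infty$ weight with constants depending only on $(K,n)$. This gives (b) through the $A_\infty$--BMO correspondence, and it gives (a) via your change of variables, H\"older's inequality and the John--Nirenberg inequality. In effect you have made explicit what the paper leaves to the reference.

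Two small points to tighten if you write it out.

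First, the bound $\kint_B J_f\cdot\exp\bigl(-\kint_B\log J_f\bigr)\le C$ does not follow directly from reverse H\"older. It comes from the nontrivial step $A_\infty\Rightarrow A_q$ for some $q<\infty$ (Coifman--Fefferman), followed by Jensen's inequality applied to $J_f^{-1/(q-1)}$. The $A_q$ condition also gives $\log J_f\in L^1_{\mathrm{loc}}$, which you need for the mean to make sense. Once you have the bound, a one-sided estimate is enough: for $v=\log J_f-(\log J_f)_B$ one has $\kint_B|v|\ud x=2\kint_B v^+\ud x\le 2\kint_B e^{v}\ud x$.

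Second, several steps rest on Lusin's condition (N) for both $f$ and $f^{-1}$, which is standard for quasiconformal maps. These are the a.e.\ well-definedness and measurability of $u\circ f^{-1}$, the change of variables $\int_{f(A)}g\ud y=\int_A (g\circ f)\,J_f\ud z$, and $J_f>0$ a.e.
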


We also employ the following Astala--Gehring quasiconformal Koebe distortion
theorem~\cite{MR777305}.

\begin{theorem}[Astala--Gehring {\cite{MR777305}}]\label{thm:AG}
Let $D$ and $D'$ be proper subdomains of $\R^n$ and let $f\colon D\to D'$ be
a $K$-quasiconformal mapping. Then
\[
\frac{1}{c}\,\frac{\dist(f(x),\partial D')}{\dist(x,\partial D)}
\le a_f(x)
\le c\,\frac{\dist(f(x),\partial D')}{\dist(x,\partial D)}
\]
for all $x\in D$, where $c=c(K,n)$ and
\[
a_f(x)=\exp\left(\frac{1}{n}\kint_{B(x,\,\dist(x,\partial D))}
\log J_f(y)\ud y\right)\,.
\]
\end{theorem}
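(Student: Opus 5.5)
The statement is the classical Astala--Gehring theorem. I would prove it by comparing both sides with the volume of the image of a Whitney ball. Fix $x\in D$ and write $d=\dist(x,\partial D)$. The first step is purely geometric. By the standard distortion theory of $K$-quasiconformal maps (local quasisymmetry on Whitney balls, or the modulus estimate for the ring $f(B(x,d))\setminus \iol{f(B(x,d/2))}$), the image $f(B(x,d/2))$ is a quasiball. Its inner and outer radii about $f(x)$ are comparable to $\dist(f(x),\partial D')$, with constants depending only on $K$ and $n$. Consequently
\[
|f(B(x,d/2))|\simeq \dist(f(x),\partial D')^n ,\qquad\text{that is,}\qquad \kint_{B(x,d/2)} J_f \simeq \Bigl(\frac{\dist(f(x),\partial D')}{d}\Bigr)^{n}.
\]

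The second step converts the arithmetic mean of $J_f$ into a geometric mean on the half ball. By Gehring's higher integrability, applied in the local form on balls $B'$ with $2B'\subset D$, $J_f$ satisfies a reverse Hölder inequality on subballs of $B(x,d/2)$ with constants depending only on $K$ and $n$. Hence $J_f$ is a uniform $A_\infty$ weight there, and
\[
\exp\Bigl(\kint_{B(x,d/2)}\log J_f\Bigr)\simeq \kint_{B(x,d/2)}J_f .
\]
Together with the first step, this gives the claim with $B(x,d/2)$ in place of $B(x,d)$ in the definition of $a_f$.

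The main obstacle is the third step: passing from $B(x,d/2)$ to the full ball $B(x,d)$, which touches $\partial D$, where $J_f$ is no longer uniformly $A_\infty$. I would handle it as follows.
\begin{enumerate}[\upshape(i)]
\item Cover $B(x,d)$ by Whitney balls $B_y=B(y,\dist(y,\partial D)/2)$. Applying the first two steps to each $B_y$ shows that the mean of $\log J_f$ over $B_y$ equals $n\log\bigl(\dist(f(y),\partial D')/\dist(y,\partial D)\bigr)+O(1)$.
\item Quasiconformal maps satisfy Hölder-type two-sided distortion along chains of Whitney balls. Since $y\in B(x,d)$ is joined to $x$ by a chain of about $\log\bigl(d/\dist(y,\partial D)\bigr)$ Whitney balls, this gives
\[
\Bigl|\log\frac{\dist(f(y),\partial D')/\dist(y,\partial D)}{\dist(f(x),\partial D')/d}\Bigr|\le C(K,n)\Bigl(1+\log\frac{d}{\dist(y,\partial D)}\Bigr).
\]
\item Combined with the local BMO bound for $\log J_f$ on each $B_y$ (which follows from Theorem~\ref{thm:Reimann}(b), or directly from the reverse Hölder inequality), the difference between $\log J_f$ and $n\log\bigl(\dist(f(x),\partial D')/d\bigr)$ is controlled by $C(1+\log(d/\dist(y,\partial D)))$ plus a mean-oscillation term on $B_y$.
\item Since $\dist(y,\partial D)\ge d-|y-x|$, the function $\log\bigl(d/(d-|y-x|)\bigr)$ has average $O(1)$ over $B(x,d)$.
\end{enumerate}
Integrating the bound in (iii) over a finite-overlap Whitney subfamily and using (iv) therefore shows that $\kint_{B(x,d)}\log J_f$ and $\kint_{B(x,d/2)}\log J_f$ differ by at most $C(K,n)$. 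This finishes the proof after exponentiating and taking $n$-th roots.
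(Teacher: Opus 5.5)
The paper does not prove this statement. It is quoted from Astala--Gehring~\cite{MR777305} and used as a black box, so there is no in-paper argument to match. Your sketch is nonetheless a correct proof, and it differs from the usual argument only in the passage from $B(x,d/2)$ to $B(x,d)$.

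Your Steps~1--2 are the standard core: distortion of the half ball, whose double lies in $D$, plus the local reverse H\"older/$A_\infty$ property of $J_f$. The usual route then finishes in one line. The function $\log J_f$ has mean oscillation at most $C(K,n)$ over \emph{every} ball contained in $D$: the local bound on balls $B$ with $2B\subset D$ is upgraded by the standard Whitney-chaining lemma for BMO on domains. Hence $|(\log J_f)_{B(x,d)}-(\log J_f)_{B(x,d/2)}|\le 2^nC(K,n)$.

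Your Step~3 is in effect a hand-made proof of that lemma for the single ball $B(x,d)$. The difference is that you chain the explicit quantities $\log\bigl(\dist(f(y),\partial D')/\dist(y,\partial D)\bigr)$ through distortion along Whitney chains, rather than chaining the Whitney averages of $\log J_f$. Your version is more self-contained and makes each Whitney average geometrically explicit. The standard route instead isolates a reusable fact: $\log J_f\in\mathrm{BMO}(D)$ uniformly.

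Two details need tightening.
\begin{enumerate}[\upshape(1)]
\item In the final integration, estimate only over $B_{y_i}\cap B(x,d)$. For $z\in B_{y_i}$ use $\dist(z,\partial D)\le\frac32\dist(y_i,\partial D)$, so that $\log\bigl(d/\dist(y_i,\partial D)\bigr)\le\log\bigl(3d/(2(d-|z-x|))\bigr)$; this is exactly what (iv) controls. Choosing a Besicovitch subfamily of $\{B_y\}_{y\in B(x,d)}$ keeps (ii) applicable to every centre.
\item Theorem~\ref{thm:Reimann}(b) is stated for entire maps. For a general $f\colon D\to D'$ the local BMO bound on $B_y$ must therefore come from the reverse H\"older inequality, as you also indicate.
\end{enumerate}

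Finally, in the paper's only application $f$ is an entire $K$-quasiconformal map restricted to a component of $\R^n\setminus E$. There Theorem~\ref{thm:Reimann}(b) applies directly, and your ``main obstacle'' reduces to a one-line comparison of the averages over $B(x,d)$ and $B(x,d/2)$.
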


The following theorem is a quantitative variant of Theorem~\ref{thm:main}.

\begin{theorem}\label{thm:QC_invariance}
Suppose that  $ E\subset\R^n$, $n\ge 2$, is a nonempty set with $\log\delta_E\in\mathrm{BMO}$.
If $f\colon\R^n\to\R^n$ is a $K$-quasiconformal mapping,
then $\log\delta_{f(E)}\in\mathrm{BMO}$ and  
\[
\|\log\delta_{f(E)}\|_{\mathrm{BMO}}
\le C(K,n,\|\log\delta_E\|_{\mathrm{BMO}})\,.
\]
\end{theorem}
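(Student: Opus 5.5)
We may assume $E$ is closed, since $\delta_E=\delta_{\iol{E}}$ and $f(\iol{E})=\iol{f(E)}$. By the observation after Lemma~\ref{lem:C}, $|E|=0$, so $D=\R^n\setminus E$ is open and dense. The plan is to apply Theorem~\ref{thm:AG} to the restriction $f\colon D\to D'=\R^n\setminus f(E)$. These are open sets, possibly disconnected. We therefore apply Theorem~\ref{thm:AG} on each component $D_0$ of $D$, which $f$ maps onto a component $D_0'$ of $D'$. For $x\in D_0$ we have $\dist(x,\partial D_0)=\delta_E(x)$ and $\dist(f(x),\partial D_0')=\delta_{f(E)}(f(x))$. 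If $E$ is a single point, then $D$ is not a proper subdomain in the required sense only in a trivial way; this case can also be handled directly, since $\log|x-x_0|\in\mathrm{BMO}$ and a quasiconformal image of a point is a point. Taking logarithms in Theorem~\ref{thm:AG} gives, for all $x\in D$,
\[
\Bigl|\log\delta_{f(E)}(f(x))-\log\delta_E(x)-\tfrac1n A_E(\log J_f)(x)\Bigr|\le \log c(K,n),
\]
with $A_E$ as in Theorem~\ref{thm:main_averaging}.

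Next we control the averaged term. By Theorem~\ref{thm:Reimann}(b), $v=\log J_f\in\mathrm{BMO}$ with norm at most $C(K,n)$. Theorem~\ref{thm:main_averaging} applies because $\log\delta_E\in\mathrm{BMO}$, and it gives $A_E v\in\mathrm{BMO}$ with norm bounded by $C(K,n,\|\log\delta_E\|_{\mathrm{BMO}})$. Hence the function
\[
w=\log\delta_E+\tfrac1n A_E v
\]
is in $\mathrm{BMO}$ on $\R^n$ (it is defined a.e.), with a controlled norm. The display above then says that $\log\delta_{f(E)}\circ f=w+b$ on $D$, where $b$ is bounded by $\log c(K,n)$ in sup norm. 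Since bounded functions have BMO norm at most twice their sup norm, $\log\delta_{f(E)}\circ f\in\mathrm{BMO}$ with controlled norm.

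Finally, we transfer back through $f$. We apply Theorem~\ref{thm:Reimann}(a) to $u=\log\delta_{f(E)}\circ f$ to get $u\circ f^{-1}=\log\delta_{f(E)}$ almost everywhere. This needs $|f(E)|=0$, which holds because quasiconformal maps preserve null sets (Lusin property $N$, also for $f^{-1}$). It also needs $\log\delta_{f(E)}\in L^1_{\mathrm{loc}}$, which follows because it is the composition of a BMO function with $f^{-1}$ and Reimann's theorem gives local integrability. This yields $\|\log\delta_{f(E)}\|_{\mathrm{BMO}}\le C(K,n)\|u\|_{\mathrm{BMO}}$, which is the claimed bound.

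The main obstacle I expect is the justification of Theorem~\ref{thm:AG} on the possibly disconnected open set $D$. One must check that $f$ maps components to components and that the boundary distances match the global distance functions. This holds because $\partial D_0\subset E$ and the nearest point of $E$ to $x\in D_0$ is reached along a segment that stays in $D_0$ until it hits $E$. One must also check that the constant $c(K,n)$ is uniform over components, which holds since it depends only on $K$ and $n$.
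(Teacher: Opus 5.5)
Your proof is correct and follows the paper's argument. The ingredients are the same: Theorem~\ref{thm:AG} on each component of $\R^n\setminus E$, Theorem~\ref{thm:Reimann}(b) with Theorem~\ref{thm:main_averaging} to place $A_E(\log J_f)$ in $\mathrm{BMO}$, and Theorem~\ref{thm:Reimann}(a) to transfer to the image side.

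The only difference is bookkeeping, which is equivalent by linearity of $u\mapsto u\circ f^{-1}$. You apply Theorem~\ref{thm:Reimann}(a) once to $\log\delta_{f(E)}\circ f=\log\delta_E+\tfrac1n A_E(\log J_f)+b$. The paper instead splits $\log\delta_{f(E)}=\Phi+\log\delta_E\circ f^{-1}$ and applies (a) to each piece. Your separate treatment of a single point is unnecessary, since $\R^n\setminus\{x_0\}$ is already a proper subdomain to which Theorem~\ref{thm:AG} applies.
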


\begin{proof}
Clearly, we may assume that $E$ is closed.
Denote $F=f(E)$ and note that $F$ is closed. Since $\log\delta_E\in\mathrm{BMO}$, we have $\lvert E\rvert=0$, and the Lusin condition $(N)$ then gives $\lvert F\rvert=0$
(see e.g.~\cite[Theorem~33.2]{MR454009}).
Denote
\[
u_E=\log\delta_E\quad\text{in }\R^n\setminus E\,,\qquad
u_F=\log\delta_F\quad\text{in }\R^n\setminus F\,.
\]
Then $u_E$ and $u_F$ are both defined almost everywhere in~$\R^n$.

Consider the decomposition
\[
u_F=(u_F-u_E\circ f^{-1})+u_E\circ f^{-1}\,.
\]
Theorem~\ref{thm:Reimann}(a) implies that $u_E\circ f^{-1}\in\mathrm{BMO}$.
It remains to show that the defect $u_F-u_E\circ f^{-1}$ belongs to $\mathrm{BMO}$.

For $x\in\R^n\setminus E$, let $G$ stand for  
the connected component of
$\R^n\setminus E$ containing~$x$. Then the restriction $f|_G\colon G\to f(G)$ is
$K$-quasiconformal, and
\begin{align*}
\delta_E(x)&=\dist(x,\R^n\setminus G)=\dist(x,\partial G)\,,\\
\delta_F(f(x))&=\dist(f(x),\R^n\setminus f(G))=\dist(f(x),\partial f(G))\,.
\end{align*}
Theorem~\ref{thm:AG} applied to $D=G$ and $D'=f(G)$ gives a constant $c=c(K,n)\ge 1$ such that
\begin{equation}\label{e.A-G}
\frac{1}{c}\,\frac{\delta_F(f(x))}{\delta_E(x)}
\le a_f(x)
\le c\cdot\frac{\delta_F(f(x))}{\delta_E(x)}
\end{equation}
for all $x\in\R^n\setminus E$, where
\[
a_f(x)=\exp\left(\frac{1}{n}\kint_{B(x,\delta_E(x))}\log J_f(y)\ud y\right)\,.
\]
Denote 
\[
A_E(\log J_f)(x)=\kint_{B(x,\delta_E(x))}\log J_f(y)\ud y
\]
and
\[
b(x)=\log\frac{\delta_F(f(x))}{\delta_E(x)}-\frac{1}{n}A_E(\log J_f)(x)
\]
for $x\in\R^n\setminus E$.
Since $\lvert E\rvert=0$, we have by
 taking logarithms in the Astala--Gehring estimate \eqref{e.A-G} and rearranging that
\[
\|b\|_{L^\infty(\R^n)}\le\log c\,.
\] 
Hence $b\in L^\infty(\R^n)\subset\mathrm{BMO}$ with 

\[\|b\|_{\mathrm{BMO}}\le 2\|b\|_{L^\infty(\R^n)}\le 2\log c\,.\]

We now introduce the defect. Define $\Phi\colon\R^n\to\R$ by
\[
\Phi(z)=u_F(z)-(u_E\circ f^{-1})(z)
\]
for $z\in\R^n\setminus F$, and extend $\Phi$ by zero in $F$.
For $x\in\R^n\setminus E$ we have that
\begin{align*}
(\Phi\circ f)(x)
&=u_F(f(x))-u_E(f^{-1}(f(x)))\\
&=u_F(f(x))-u_E(x)\\
&=\log\delta_F(f(x))-\log\delta_E(x)\\
&=\log\frac{\delta_F(f(x))}{\delta_E(x)}
 \,.
\end{align*}
Thus 
\begin{equation}\label{eq:key_identity}
\Phi\circ f=\frac{1}{n}A_E(\log J_f)+b\quad\text{in }\R^n\setminus E\,.
\end{equation}

Now Reimann's BMO isomorphism Theorem~\ref{thm:Reimann}(b) ensures that  $\log J_f\in\mathrm{BMO}$.
Consequently, $A_E(\log J_f)\in\mathrm{BMO}$ by Theorem~\ref{thm:main_averaging}
since also $\log\delta_E\in\mathrm{BMO}$.
This combined with $b\in \mathrm{BMO}$ and~\eqref{eq:key_identity},
 guarantees that $\Phi\circ f\in\mathrm{BMO}$.
Now, employing again  Reimann's isomorphism Theorem~\ref{thm:Reimann}(a) we   have
\[
\Phi=(\Phi\circ f)\circ f^{-1}\in\mathrm{BMO}\,
\]
and, finally, arrive at the desired conclusion  $u_F=\Phi+u_E\circ f^{-1}\in\mathrm{BMO}$. 
\end{proof}

By the equivalence between $\log\delta_E\in\mathrm{BMO}(\R^n)$ and median porosity recalled in the introduction, Theorem~\ref{thm:QC_invariance} proves Theorem~\ref{thm:main}.

\begin{remark}
It seems plausible that Theorem~\ref{thm:QC_invariance} extends to
$K$-quasiconformal mappings of the first Heisenberg group $\mathbb H^1$,
with the required Reimann and Astala--Gehring estimates supplied
by~\cite{MR3453992,MR4065111}. We do not pursue this here.
\end{remark}

\begin{remark}
The proof of Theorem~\ref{thm:QC_invariance} can also be cast at the
$A_\infty$ level using Uchiyama's invariance theorem~\cite{MR442226}; the
Jacobian factor in the  push forward $w\mapsto(w\circ f^{-1})\,J_{f^{-1}}$
then has to be controlled separately, and the resulting argument is more
involved than the BMO version above.
\end{remark}

\section{Weak porosity is not quasiconformally invariant}

Recall the porosity hierarchy for  a set $E\subset\R^n$
\[
E\text{ is porous}\implies E\text{ is weakly porous}\implies E\text{ is median porous} 
\]
and  the following Muckenhoupt characterizations: $E$ is weakly porous (Definition~\ref{def:weakly-porous}) if and only if $\delta_E^{-\alpha}\in A_1$ for some $\alpha>0$~\cite{Vasin2003,MR4773553}; $E$ is median porous if and only if $\delta_E^{-\alpha}\in A_\infty$ for some $\alpha>0$~\cite{PU25}. The latter is preserved under quasiconformal maps of $\R^n$, $n\ge 2$ 
(Theorem~\ref{thm:main}).
We 
 show the former is not: for every $K>1$, some $K$-quasiconformal map of $\R^n$ fails to preserve weak porosity.
More precisely, we establish: 

\begin{theorem}\label{thm:sharpness}
Let $n\ge 2$ and $K>1$. There exist a weakly porous set $E\subset\R^n$ and a $K$-quasiconformal mapping $f\colon\R^n\to\R^n$ such that the image $f(E)$ is not weakly porous but  is median porous. 
\end{theorem}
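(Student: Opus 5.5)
The plan is to build $E$ and $f$ from radially symmetric pieces, so that both conditions reduce to one-variable statements about the radii of concentric spheres. Median porosity of $f(E)$ is automatic: a weakly porous set is median porous, and Theorem~\ref{thm:main} transfers median porosity through any quasiconformal map. So the whole task is to find a weakly porous $E$ and a $K$-quasiconformal $f$ with $f(E)$ not weakly porous.

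\emph{Why naive choices fail.} Any single-scale configuration is ruled out by quasisymmetry. If a hole $H\subset B$ occupies a fixed fraction of $B$, then $|f(H)|/|f(B)|\ge C(K,n)(|H|/|B|)^p$, so the fraction stays bounded below. The destruction must therefore accumulate over infinitely many scales. A pure radial stretch $x\mapsto|x|^{\alpha-1}x$ with $\alpha=K^{1/(n-1)}$ also fails. Take $E$ to be a union of spheres $S(0,t_k)$ with gaps $h(t)\approx t^{\beta}$. The stretch turns the exponent $\beta$ into $(\alpha-1+\beta)/\alpha$, which is $>1$ exactly when $\beta>1$. So polynomial-gap families never cross the threshold between porous and non-porous.

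\emph{Construction.} I would work exactly at the borderline allowed by Definition~\ref{def:weakly-porous}. Weak porosity only asks that, in each ball $B$ of radius $R$, boundedly many disjoint holes of radius $\ge\delta R$ cover a fixed fraction of $|B|$. The set must be weakly porous while most scales, taken individually, are not porous. I would choose radii $t_k\downarrow 0$ that cluster at $0$ and group the spheres into generations. In each generation, a single large shell (the hole) of definite relative thickness is followed by a long run of very thin shells. The lengths of the thin runs are tuned so that around every center and every radius a large shell of comparable size is still visible. This is the weakly porous but non-porous regime of~\cite{MR4773553}.

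For $f$, I would use a radial map $f(x)=g(|x|)\,x/|x|$. Its logarithmic derivative $t\,g'(t)/g(t)$ alternates between $1$ and $K^{\pm1/(n-1)}$ on dyadic blocks of radii, so that $f$ is $K$-quasiconformal. The alternation is chosen so that each large shell is compressed, by a factor bounded below, relative to the adjacent thin run. Applied over a number $m_j\to\infty$ of consecutive generations, these bounded losses compound. Near suitable radii, the total measure of $f(E)$-holes of radius $\ge\delta R$ in $B(0,R)$ then tends to $0$ for every fixed $\delta$, and $f(E)$ is not weakly porous.

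\emph{Steps in order.} First, verify the quasiconformality of $f$ from the standard formula for radial maps; the dilatation is controlled by the ratio of $t\,g'/g$ to $1$. Second, verify weak porosity of $E$ through the hole-counting definition, separating balls centred near $0$ from balls centred far from $0$. For the latter, the spheres are locally almost flat, and the problem reduces to one dimension. Third, exhibit the failing balls $B(0,R_j)$ for $f(E)$. The third step, together with the bookkeeping that makes $E$ weakly porous yet fragile enough to break, is the main obstacle. The balance is delicate: the hole fraction must stay above $c$ for $E$, yet drop below any $\epsilon$ after a bounded distortion repeated $m_j$ times. If the one-dimensional radial model turns out too rigid, I would instead use disjoint blocks $Q_j$ with a $K$-quasiconformal map supported near each block, and superpose them.
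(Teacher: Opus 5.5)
Your first step (median porosity of $f(E)$ from $A_1\subset A_\infty$ and Theorem~\ref{thm:main}) is fine and matches the paper. The rest has a genuine gap, and it starts with a misreading of Definition~\ref{def:weakly-porous}.

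You paraphrase weak porosity as ``in each ball of radius $R$, boundedly many disjoint holes of radius $\ge\delta R$ cover a fixed fraction of $|B|$''. That is just porosity, since a single hole of radius $\delta R$ already covers the fraction $\delta^n$. Porosity is preserved by entire quasiconformal maps, so under your reading the theorem would be false. In the actual definition the admissible hole size is $\delta|\mathcal M(P)|$, measured against the largest $E$-free dyadic subcube of $P$, which can be far smaller than $P$. The number of holes is also not bounded.

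This has two consequences. Your failure criterion (holes of radius $\ge\delta R$ in $B(0,R)$ carrying vanishing measure) only disproves porosity, unless you also show that $B(0,R)$ contains an $f(E)$-free cube of side comparable to $R$. And tuning $E$ so that ``a large shell of comparable size is still visible around every center and every radius'' would make $E$ porous, which dooms the construction. Beyond this, the radii $t_k$, the generation lengths and the profile $g$ are never specified, and both verifications are left open. So no example is actually produced.

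The missing idea is that the example you discarded already works. Your formula $\beta\mapsto(\alpha-1+\beta)/\alpha$ is correct, but $\beta=1$ is the threshold for porosity. In this radial family the dividing line for weak porosity is $\beta=0$: bounded gaps versus gaps tending to zero. Suppose the gaps tend to zero. Then $[0,2L)^n$ has a free dyadic subcube of unit size inside $B(0,1)$. Its dyadic child $[L,2L)\times[0,L)^{n-1}$ has only free cubes no larger than the local gap. This violates the doubling $|\mathcal M(R)|\le C|\mathcal M(Q)|$ of Lemma~\ref{lem:M-doubling}, which every weakly porous set satisfies.

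The paper takes $E=\bigcup_{m\ge1}\partial B(0,m)$. It has constant gaps ($\beta=0$) and is weakly porous by a tubular-neighbourhood count over maximal free dyadic cubes. The map is the \emph{contracting} stretch $f_\gamma(x)=|x|^{\gamma-1}x$ with $\gamma=1/K$, which is $K$-quasiconformal in the outer-dilatation convention. You only considered the expanding direction $\alpha=K^{1/(n-1)}>1$, where gaps grow. With $\gamma<1$ one gets $\beta'=(\gamma-1)/\gamma<0$. The gaps of $f_\gamma(E)$ beyond radius $L$ are at most $\gamma L^{(\gamma-1)/\gamma}\to0$, while $|\mathcal M([0,2L)^n)|$ stays bounded below.

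The accumulation over many scales that you rightly anticipate comes from the $\log_2 L$ dyadic scales separating the unit hole at the origin from the shrinking gaps at radius $L$. Your alternating-profile mechanism does not obviously produce it. A radial map with $tg'(t)/g(t)$ pinched between two positive constants is, on each rescaled dyadic annulus, bi-Lipschitz with constants depending only on $K$ and $n$. So on the outer annuli of $B(0,R)$, which carry most of its volume, it changes hole sizes relative to $R$ only by bounded factors. Losses compound only when the comparison hole $\mathcal M(P)$ lies many dyadic scales away from the bulk of $|P|$, which is exactly the paper's configuration.
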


For the proof we first recall the relevant definitions and structural results from the theory of weakly porous sets.

Following~\cite[\S2]{MR4773553}, we work with
{\it half-open} cubes whose sides are parallel to the coordinate axes,
that is, sets of the form
\[
Q=[a_1,b_1)\times\cdots\times[a_n,b_n),\qquad
\ell(Q)=b_1-a_1=\cdots=b_n-a_n.
\]
The {\it dyadic decomposition} of a cube $P\subset\R^n$ is
\[
\D(P)={\textstyle \bigcup\limits_{j=0}^{\infty}}\D_j(P),
\]
where $\D_j(P)$ consists of the $2^{jn}$ pairwise disjoint half-open
cubes of side length $2^{-j}\ell(P)$ that partition $P$; in particular,
$\D_0(P)=\{P\}$. The cubes in $\D(P)$ are nested in the sense that
$P_1\cap P_2\in\{P_1,P_2,\emptyset\}$ for all $P_1,P_2\in\D(P)$, and every
$Q\in\D_j(P)$ with $j\ge 1$ is contained in a unique cube of
$\D_{j-1}(P)$, which we call its {\it dyadic parent}.

\begin{definition}[Definition 3.1 of~\cite{MR4773553}]\label{def:weakly-porous}
Let $E\subset\R^n$ be a nonempty set. 
\begin{itemize}
\item A subcube $Q\in\D(P)$ of a cube $P\subset\R^n$ is said to be {\it $E$-free} if $E\cap Q=\emptyset$, and we write $\mathcal M(P)\in\D(P)$ for an $E$-free dyadic subcube of $P$ with largest side length (fixed if not unique).
\item The set $E$ is called {\it weakly porous} if there exist constants $0<c,\delta<1$ such that for every cube $P\subset\R^n$ there exist finitely many pairwise disjoint $E$-free subcubes $Q_1,\ldots,Q_N\in\D(P)$ with
\[
|Q_k|\ge \delta|\mathcal M(P)| \quad\text{for all $k=1,\ldots,N$,}
\qquad\text{and}\qquad
\sum_{k=1}^N |Q_k|\ge c|P|\,.
\]
\end{itemize}
\end{definition}

We now start the construction for Theorem~\ref{thm:sharpness} with
 the set
\[
E={\textstyle \bigcup\limits_{m=1}^{\infty}}\partial B(0,m).
\]
The uniform gaps in $E$ make it weakly porous.

\begin{lemma}\label{lem:E-weakly-porous}
The set
\[
E={\textstyle \bigcup\limits_{m=1}^{\infty}}\partial B(0,m)
\]
is weakly porous in $\R^n$ for $n\ge 2$.
\end{lemma}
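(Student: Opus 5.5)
We verify Definition~\ref{def:weakly-porous} directly with the counting bound $c$ and the size ratio $\delta$ chosen depending only on $n$, and we split into two regimes according to the side length $\ell=\ell(P)$ of the cube $P$.

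\emph{Small cubes, $\ell\le \ell_0$.} Here $\ell_0>0$ is a small dimensional constant, say $\ell_0=1/(10\sqrt n)$. Then $P$ has diameter at most $1/10$. Because consecutive spheres are at distance $1$, $P$ meets at most one sphere $\partial B(0,m)$. So it suffices to show that the portion of $P$ away from a single sphere $S=\partial B(0,m)$, $m\ge1$, can be filled by dyadic cubes of comparable size.

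Since $m\ge1$ and $\operatorname{diam} P\le 1/10$, the sphere $S\cap 3P$ is a Lipschitz graph over a hyperplane with uniformly bounded Lipschitz constant. It is in fact nearly flat at scale $\ell$, because its curvature $1/m\le1$ gives deviation $\lesssim \ell^2\le \ell_0\ell$ from the tangent plane. Such a set is porous at every scale up to $\ell$ with dimensional constants. Concretely, among the $2^{jn}$ cubes of $\D_j(P)$ for a fixed $j=j(n)$, at most $C2^{j(n-1)}$ meet the near-hyperplane $S$. Choosing $j$ large makes these at most half of all subcubes, so at least half are $E$-free. They all have side $2^{-j}\ell\ge 2^{-j}\ell(\mathcal M(P))$, so we may take $\delta=2^{-jn}$ and $c=1/2$.

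\emph{Large cubes, $\ell>\ell_0$.} Take $j\ge0$ minimal with $2^{-j}\ell\le \ell_0$, so that every $Q\in\D_j(P)$ has side in $(\ell_0/2,\ell_0]$. Each such $Q$ meets at most one sphere, and by the small-cube argument it contains $E$-free dyadic subcubes of side $2^{-j'}\ell(Q)\gtrsim \ell_0$ covering at least half of $|Q|$. Summing over $Q\in\D_j(P)$ gives $E$-free cubes of total measure at least $|P|/2$, each with side $\ge c(n)$.

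It remains to compare with $\mathcal M(P)$. We claim $\ell(\mathcal M(P))\le 2$ in this regime. Any $E$-free dyadic subcube must avoid all spheres, so it lies inside a region of the form $\{m<|x|<m+1\}$ or inside $B(0,1)$, whose width is at most $2$. Its side is therefore bounded by an absolute constant. Hence $|Q_k|\ge c(n)\ge \delta\,|\mathcal M(P)|$ with $\delta=\delta(n)$.

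\emph{Main obstacle.} The delicate step is the uniform counting in the small-cube case: we need a dimensional bound on the number of subcubes of $\D_j(P)$ meeting a sphere of radius $m\ge1$ inside a cube of side $\le\ell_0$. We handle this through the near-flatness estimate: $S\cap P$ lies in a slab of width $\lesssim \ell^2$ around a hyperplane. The subcubes meeting it therefore lie in the $\sqrt n\,2^{-j}\ell$-neighbourhood of that slab, which contains $O(2^{j(n-1)})$ subcubes when $\ell\le\ell_0$. Everything else is bookkeeping with dimensional constants.
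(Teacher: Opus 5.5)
Your proof is correct in outline, but it takes a different route from the paper's.

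\textbf{The paper's route.} The paper treats all cubes at once. It first shows $\ell(\mathcal M(P))\le 2$. It then proves a single tube estimate,
\[
|\{x\in P:\dist(x,E)<\lambda\,\ell(\mathcal M(P))\}|\le C_n\lambda|P|,
\]
where the case split $\ell(P)\le 1$ versus $\ell(P)>1$ occurs only inside this estimate. Finally it takes the maximal $E$-free dyadic subcubes of $P$ and discards the ``bad'' ones, of side $<\theta\,\ell(\mathcal M(P))$, because they lie in that thin tube.

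\textbf{Your route.} You split at a fixed dimensional scale $\ell_0$ and work with fixed dyadic generations. For small cubes only one nearly flat sphere is visible, so a fixed proportion of $\D_j(P)$ is $E$-free. There the comparison with $\mathcal M(P)$ is trivial, since $|\mathcal M(P)|\le|P|$. For large cubes you tile by small cubes, and you need the absolute bound $\ell(\mathcal M(P))\le 2$ only in this case.

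\textbf{What each buys.} Your argument is more hands-on and avoids the Whitney-type maximal-cube decomposition. The paper's argument isolates one quantitative input, namely the tube bound at the scale of the largest free cube. That is the mechanism that carries over to other closed null sets.

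\textbf{A detail to fix in the small-cube count.} You claim that at most $O(2^{j(n-1)})$ subcubes of $\D_j(P)$ meet $S$, but the slab argument you give does not prove this.
\begin{itemize}
\item The slab containing $S\cap P$ has width up to about $n\ell^2/2$.
\item Its $\sqrt n\,2^{-j}\ell$-neighbourhood therefore contains on the order of $(n\ell+\sqrt n\,2^{-j})\,2^{jn}$ subcubes.
\item The slab term $n\ell\,2^{jn}$ is a fixed fraction of all subcubes and does not shrink as $j$ grows.
\end{itemize}
So, via the slab, you must first fix $\ell_0\lesssim 1/n$ and only then choose $j$. Your illustrative choice $\ell_0=1/(10\sqrt n)$ gives the slab relative width about $\sqrt n/20$, which is not small in high dimensions.

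A cleaner fix is to count directly. The subcubes meeting $S$ lie in $\{x\in P:\dist(x,S)<\sqrt n\,2^{-j}\ell\}$. By the tubular estimate used in the paper, this set has volume at most $C_n 2^{-j}\ell^n$. That gives the $O(2^{j(n-1)})$ bound uniformly in $\ell$, so you can fix $\ell_0$ and $j$ independently.
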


\begin{proof}
Let $P\subset\R^n$ be a cube with $P\cap E\ne\emptyset$. Note that the case
$P\cap E=\emptyset$ is trivial. Write $\ell_0=\ell(\mathcal M(P))$.
We first show that $\ell_0\le 2$. The radial range $\{|x|:x\in\mathcal M(P)\}$
is an interval of length at least $\ell_0/2$ avoiding the positive integers
(since $\mathcal M(P)$ is $E$-free), hence lies in a unit-length
component of $[0,\infty)\setminus\{1,2,3,\ldots\}$; thus $\ell_0\le 2$.

We first establish the following estimate: for every $0<\lambda<1/2$,
\begin{equation}\label{eq:tube-at-M-scale}
\bigl|\{x\in P:\dist(x,E)<\lambda \ell_0\}\bigr|
   \le C_n\lambda |P|.
\end{equation}
Set $r=\lambda \ell_0$, so $r<1$. The standard tubular volume estimate for a
sphere gives, for any integer $m\ge 1$,
\begin{equation}\label{eq:tubular-estimate}
\bigl|\{x\in P:\dist(x,\partial B(0,m))<r\}\bigr|
   \le C_n\,r\,\ell(P)^{n-1}.
\end{equation}
The radial range of $P$ has length at most $\sqrt n\,\ell(P)$, so $P$
comes within distance $r<1$ of at most $\lceil\sqrt n\,\ell(P)\rceil+2$
such spheres. By \eqref{eq:tubular-estimate} and subadditivity,
\[
\bigl|\{x\in P:\dist(x,E)<r\}\bigr|
   \le \bigl(\lceil\sqrt n\,\ell(P)\rceil+2\bigr)\,C_n\,r\,\ell(P)^{n-1}.
\]
For $\ell(P)\le 1$, the count is at most $\lceil\sqrt n\rceil+2$ and
$r\le\lambda\,\ell(P)$ (from $\ell_0\le\ell(P)$); for $\ell(P)>1$, the count
is at most $4\sqrt n\,\ell(P)$ and $r\le 2\lambda$ (from $\ell_0\le 2$).
Either way the right-hand side is at most $C_n\,\lambda\,|P|$, proving
\eqref{eq:tube-at-M-scale}.

Let $\mathcal W(P)$ denote the family of {\it maximal $E$-free dyadic
subcubes} of $P$ --- that is, those $Q\in\D(P)$ with $E\cap Q=\emptyset$
whose dyadic parent in $\D(P)$ meets $E$.
The cubes in $\mathcal W(P)$ are pairwise disjoint,
and since $E$ is closed, \[\bigcup_{Q\in\mathcal W(P)} Q = P\setminus E\,.\]
Fix $\theta>0$, to be chosen below, and call $Q\in\mathcal W(P)$
{\it good} if
\[
\ell(Q)\ge\theta \ell_0,
\]
and {\it bad} otherwise. A bad cube $Q$ is $E$-free, and therefore $\ell(Q)\le \ell_0<\ell(P)$, so its
dyadic parent in $\mathcal D(P)$ exists; the parent meets $E$ by
maximality. Hence
\[
Q\subset\{x\in P:\dist(x,E)\le 2\sqrt n\,\ell(Q)\}
 \subset \{x\in P:\dist(x,E)<2\sqrt n\,\theta \ell_0\}.
\]
Applying \eqref{eq:tube-at-M-scale} with $\lambda=2\sqrt n\,\theta$
(assuming $\theta<1/(4\sqrt n)$) gives
\[
\sum_{\substack{Q\in\mathcal W(P)\\ Q\text{ bad}}}|Q|
   \le |\{x\in P:\dist(x,E)<2\sqrt n\,\theta \ell_0\}|
   \le 2\sqrt n\,C_n\,\theta\,|P|.
\]
Choose $\theta=\theta(n)>0$ so small that
\[
\theta<1/(4\sqrt n)\quad \text{ and }\quad 2\sqrt n\,C_n\,\theta\le 1/2\,.
\]
Since $|E|=0$, $\sum_{Q\in\mathcal W(P)}|Q|=|P|$, so the good cubes carry
total volume at least $|P|/2$. Each good $Q$ is $E$-free, lies in
$\mathcal D(P)$, and has $|Q|\ge\theta^n|\mathcal M(P)|$. These good cubes are pairwise disjoint and, by the lower bound on $|Q|$, finite in number. This realizes
Definition~\ref{def:weakly-porous} with $c=\tfrac12$ and $\delta=\theta^n$,
both less than $1$.
\end{proof}

We now turn to quasiconformal non-invariance. For this, we define the {\it radial stretching} to be the homeomorphism $f_\gamma\colon\R^n\to\R^n$
with
\[
f_\gamma(x)=|x|^{\gamma-1}x\quad\text{for }x\ne 0,
\]
where $0<\gamma<1$.
The mapping $f_\gamma$ is $K$-quasiconformal with $K=1/\gamma$ in the outer-dilatation convention of Section~3 (singular values $\gamma|x|^{\gamma-1}$ radial, $|x|^{\gamma-1}$ tangential; see~\cite[\S 6.5.1]{MR1859913}).
The set
\[
f_\gamma(E)={\textstyle \bigcup\limits_{m=1}^{\infty}}\partial B(0,m^\gamma)
\]
is the image of the set $E$ under the radial stretching $f_\gamma$.

 The annuli $ B(0, (m+1)^{\gamma})\setminus \iol{B}(0, m^{\gamma}) $ diminish too fast
 for $f_\gamma(E)$ to be weakly porous. To verify this, we will use the following doubling property of $|\mathcal M(\,\cdot\,)|$.

\begin{lemma}[Lemma 3.2(ii) of~\cite{MR4773553}]\label{lem:M-doubling}
 Assume that $E\subset\R^n$ is weakly porous with constants $0<c,\delta<1$. Then there exists $C=C(n,c,\delta)>0$ such that
\[
|\mathcal M(R)|\le C\,|\mathcal M(Q)|
\]
whenever $Q\subset R$ are cubes with $|R|=2^n|Q|$.
\end{lemma}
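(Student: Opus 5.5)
We prove the contrapositive-free, direct form: starting from the largest $E$-free dyadic subcube $\mathcal M(R)$, we transport a comparable $E$-free cube into $Q$ through a bounded number of small translations. At each translation, weak porosity loses only a fixed factor.

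\textbf{Step 0: a geometric fact.} If a cube $P$ contains an $E$-free axis-parallel cube $S$ (not necessarily dyadic) of side $s\le\ell(P)$, then $\ell(\mathcal M(P))\ge s/4$. Indeed, pick the dyadic level $2^{-j}\ell(P)\in(s/4,s/2]$. Along each coordinate, the projection of $S$ has length $s$ and hence contains a full dyadic interval of that length. The product of these intervals is a cube of $\D(P)$ inside $S$, so it is $E$-free. The same argument applies to a box all of whose sides are $\ge s/2$: it contains an $E$-free cube of $\D(P)$ of side $\ge s/8$.

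\textbf{Reduction.} Write $L=\ell(Q)=\ell(R)/2$ and $\ell_0=\ell(\mathcal M(R))$. If $\ell_0=2L$, then $R$ is $E$-free, so $Q$ is too, and the claim is trivial. Otherwise $\ell_0\le L$, and we choose a cube $P_0\subset R$ of side $L$ with $\mathcal M(R)\subset P_0$. By Step 0, $\ell(\mathcal M(P_0))\ge\ell_0/4$.

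\textbf{One translation step.} Let $P$ be a cube of side $L$, and let $P'=P+tLe_i$ with $|t|\le\varepsilon:=c/2$. Apply weak porosity to $P$. This gives disjoint $E$-free cubes $Q_k\in\D(P)$ with $\ell(Q_k)\ge\delta^{1/n}\ell(\mathcal M(P))$ and $\sum|Q_k|\ge c|P|$. Since $|P\setminus P'|\le\varepsilon|P|\le\tfrac12\sum|Q_k|$, we get $\sum_k|Q_k\cap P'|\ge\tfrac12\sum_k|Q_k|$. Hence some $k$ satisfies $|Q_k\cap P'|\ge\tfrac12|Q_k|$. The set $Q_k\cap P'$ is a box obtained from $Q_k$ by clipping only the $i$-th side, and that side keeps at least half its length. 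So the box has all sides $\ge\ell(Q_k)/2$, and it is $E$-free. By Step 0 applied in $P'$,
\[
\ell(\mathcal M(P'))\ge\tfrac18\,\delta^{1/n}\,\ell(\mathcal M(P)).
\]

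\textbf{Iteration and the main obstacle.} Both $P_0$ and $Q$ lie in $R$, so the displacement from $P_0$ to $Q$ is at most $L$ in each coordinate. We therefore reach $Q$ from $P_0$ by at most $N=n\lceil 1/\varepsilon\rceil$ coordinate translations, each of size $\le\varepsilon L$. This yields
\[
\ell(\mathcal M(Q))\ge \bigl(\tfrac18\delta^{1/n}\bigr)^{N}\tfrac{\ell_0}{4},
\]
which after raising to the $n$-th power gives $|\mathcal M(R)|\le C(n,c,\delta)|\mathcal M(Q)|$. The delicate point is the one-step estimate. The free cubes produced by weak porosity may straddle the boundary of the shifted cube, so we must ensure that one of them keeps a large portion inside $P'$. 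This is handled by translating in one coordinate direction at a time, with step size tied to $c$, and by the averaging argument above.
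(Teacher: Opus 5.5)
The paper gives no proof of this lemma. It imports it from \cite{MR4773553} and uses it only as a black box in the proof of Lemma~\ref{lem:image-not-weakly-porous}, so there is no in-paper argument to compare with.

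Your proposal is a correct, self-contained proof. The one-step estimate is sound. Since the $Q_k$ are disjoint and lie in $P$, $\sum_k|Q_k\setminus P'|\le|P\setminus P'|=|t|\,|P|\le\frac c2|P|\le\frac12\sum_k|Q_k|$, so some $Q_k$ keeps at least half its volume in $P'$. The grid argument of Step~0, at the level $2^{-j}\ell(P')\in(s/8,s/4]$ with $s=\ell(Q_k)$, then puts a cube of $\D(P')$ of side $>s/8$ inside the $E$-free box $Q_k\cap P'$. The chain length $N=n\lceil 2/c\rceil$ depends only on $n$ and $c$, which gives $C=4^n8^{nN}\delta^{-N}$.

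The chain of small overlapping translates is the right mechanism here. Weak porosity applied to $R$ alone says nothing about $Q$, because for small $c$ all the large free cubes it produces may lie in $R\setminus Q$.

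Two minor simplifications are available. First, the one-coordinate-at-a-time restriction is unnecessary. For any axis-parallel cube $P'$ of side $L$, the box $Q_k\cap P'$ has side ratios at most $1$ whose product is at least $\frac12$, so every side is at least $\ell(Q_k)/2$. Second, take $P_0$ to be the dyadic child of $R$ containing $\mathcal M(R)$. Then $\mathcal M(R)\in\D(P_0)$, so $\ell(\mathcal M(P_0))\ge\ell_0$ without the factor $\frac14$.
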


\begin{lemma}\label{lem:image-not-weakly-porous}
For every $\gamma\in(0,1)$ and $n\ge 2$, the set
\[f_\gamma(E)={\textstyle \bigcup\limits_{m=1}^{\infty}}\partial B(0,m^\gamma)
\] is \emph{not} weakly porous in $\R^n$.
\end{lemma}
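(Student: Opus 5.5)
We argue by contradiction. Suppose $F=f_\gamma(E)$ is weakly porous with constants $0<c,\delta<1$, and let $C$ be the constant of Lemma~\ref{lem:M-doubling}. The idea is that $|\mathcal M(\cdot)|$ can only grow by a bounded factor per dyadic doubling, whereas the gaps between consecutive spheres of $F$ shrink polynomially far from the origin. We will exhibit a chain of cubes, doubling in size, along which $|\mathcal M|$ grows faster than any geometric rate allows.

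\emph{Gap widths.} The annuli of $\R^n\setminus F$ have radial widths
\[
(m+1)^\gamma-m^\gamma\approx\gamma m^{\gamma-1}\to 0.
\]
Near radius $t=m^\gamma$ the gap width is therefore $w(t)\approx\gamma t^{1-1/\gamma}$. An $F$-free cube sitting near radius $t$ has side at most $w(t)$; more precisely, its radial range is an interval of length at least $\ell/2$ avoiding $\{m^\gamma\}$, so $\ell\lesssim w(t)$.

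\emph{The chain.} Fix a large $t$ and let $Q_0$ be a cube centred at $(t,0,\dots,0)$ with side comparable to $w(t)$, chosen so that $\ell(\mathcal M(Q_0))\lesssim w(t)$. Form nested cubes $Q_0\subset Q_1\subset\cdots\subset Q_J$ with $|Q_{j+1}|=2^n|Q_j|$. To do this compatibly with Lemma~\ref{lem:M-doubling}, each $Q_{j+1}$ is a cube of twice the side containing $Q_j$, with $Q_j$ placed at the corner of $Q_{j+1}$ nearest the origin. We stop at $J\approx\log_2(t/w(t))$, so that $\ell(Q_J)\approx t$. For appropriate placement (say toward the origin) $Q_J$ then contains a ball around the origin of radius comparable to $t$ minus its offset. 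Since $\R^n\setminus F$ contains $B(0,1)$, a dyadic subcube of $Q_J$ of side $\gtrsim \min(1,\ell(Q_J))$ is $F$-free, giving $|\mathcal M(Q_J)|\gtrsim 1$.

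\emph{The contradiction.} Iterating Lemma~\ref{lem:M-doubling} gives
\[
1\lesssim|\mathcal M(Q_J)|\le C^J|\mathcal M(Q_0)|\lesssim C^J w(t)^n.
\]
This bound is polynomial in $t$ on both sides, so the simple chain alone does not yet produce a contradiction; this is the main obstacle. To overcome it, we use weak porosity directly at an intermediate scale rather than only the doubling of $\mathcal M$. Take a cube $P$ of side $L=t/2$ centred at radius $t$. All of its $F$-free dyadic subcubes have side $\lesssim w(t/4)$, since the gap widths there are all comparable to $w(t)$. Hence $|\mathcal M(P)|\approx w(t)^n$ up to a dyadic factor.

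The issue is then whether $P$ admits disjoint free cubes of size $\ge\delta|\mathcal M(P)|$ covering a fraction $c$ of $P$. It does, so a single cube does not contradict Definition~\ref{def:weakly-porous}, and the obstruction has to come from a cube straddling two regimes. Take $P$ centred at the origin with side $2t$. It contains the ball $B(0,1)$, which forces $|\mathcal M(P)|\gtrsim 1$. Every admissible $Q_k$ then has $|Q_k|\ge\delta|\mathcal M(P)|\gtrsim\delta$, so it must lie in the region where the gaps have width $\gtrsim\delta^{1/n}$, namely $|x|\lesssim R_\delta$ for some $R_\delta$ independent of $t$. Therefore
\[
\sum_k|Q_k|\lesssim R_\delta^n,
\]
whereas $c|P|\approx c\,t^n\to\infty$. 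This is a contradiction.

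So the key step is simply choosing large cubes $P$ centred at the origin. The straightforward part is checking the bound $\ell(Q)\lesssim$ gap width for $F$-free cubes, using the radial-range argument from Lemma~\ref{lem:E-weakly-porous}; one also needs that a dyadic subcube of $P$ of side $\approx 1/2$ is contained in $B(0,1)$. Lemma~\ref{lem:M-doubling} is then not needed at all. Finally, median porosity of $f_\gamma(E)$ follows from Theorem~\ref{thm:main} together with Lemma~\ref{lem:E-weakly-porous}.
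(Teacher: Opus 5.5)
Your final argument is correct, but it takes a different route from the paper's.

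\emph{The paper's route.} It applies the doubling property of $|\mathcal M(\cdot)|$ (Lemma~\ref{lem:M-doubling}) exactly once. For $L=2^{k\gamma}$, the cube $R_L=[0,2L)^n$ has a dyadic subcube inside $B(0,1)$, so $|\mathcal M(R_L)|\ge c(n)$. Its dyadic child $Q_L=[L,2L)\times[0,L)^{n-1}$ lies in $\{|x|\ge L\}$, where free cubes have side at most $\gamma 2^{k(\gamma-1)}$. Hence $|\mathcal M(R_L)|/|\mathcal M(Q_L)|\to\infty$.

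\emph{Your route.} You test Definition~\ref{def:weakly-porous} directly on $P=[-t,t)^n$. The dyadic cube $[0,s)^n\in\D(P)$ with $s\in[1/(2\sqrt n),1/\sqrt n)$ lies in $B(0,1)$, so each admissible $Q_k$ has side at least $\delta^{1/n}s$. A free cube lies in a single complementary component, and if that component is an annulus, its radial range has length at least its side. So all $Q_k$ are confined to a ball $B(0,R_\delta)$ with $R_\delta=R(\delta,\gamma,n)$. Disjointness then gives $\sum_k|Q_k|\le|B(0,R_\delta)|$, which is smaller than $c(2t)^n$ for large $t$.

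\emph{Comparison.} Your version needs no imported lemma and shows exactly which requirement fails: free cubes comparable to the largest one cannot fill a fixed fraction of a large cube. The paper's version is shorter given the cited lemma, and it localizes the failure to one pair of adjacent scales.

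\emph{Comments on the write-up.} Your reason for dropping the doubling approach is mistaken. $|\mathcal M(Q)|$ is governed by the local gap width, not by $\ell(Q)$. Your own cube of side $t/2$ at radius $t$ already has $|\mathcal M|\lesssim w(t)^n$. Two doublings reach a cube containing the origin with $|\mathcal M|\gtrsim_n 1$, which gives the contradiction; this is essentially the paper's proof. The chain starting at side $w(t)$ was simply too long.

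Also, dyadic subcubes of $[-t,t)^n$ have the origin as a vertex. So you need side below $1/\sqrt n$, not $\approx 1/2$, which fails for $n\ge 4$; this only changes constants.

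The exploratory chain should be removed from the final version.
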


\begin{proof}
We will observe that the property in Lemma~\ref{lem:M-doubling} fails for $f_\gamma(E)$, whence it cannot be
 weakly porous.

 Fix an integer $k\ge 1$ and set $L=2^{k\gamma}$. We consider cubes
\[
 R_L=[0,2L)^n\quad\text{ and } \quad Q_L=[L,2L)\times [0,L)^{n-1}\,.
\]
Then $Q_L$ is a dyadic subcube of $R_L$ and $|R_L|=2^n|Q_L|$.

 We first observe that the largest dyadic subcube of $R_L$ that is contained in $B(0,1)$ has {side length} $\ge 1/(4\sqrt{n})$.
 Hence
 \begin{equation}\label{eq:M_RL-lower}
|\mathcal M(R_L)|\ge c(n)>0.
\end{equation}

 On the other hand,  every $f_\gamma(E)$-free dyadic subcube of $Q_L$ lies in  some  annulus $ B(0, (m+1)^{\gamma})\setminus \iol{B}(0, m^{\gamma}) $,
 where $m^{\gamma}\ge L$. So  {its side length} cannot exceed
 \[
 (m+1)^\gamma -m^\gamma \le \gamma m^{\gamma-1}\le\gamma L^{(\gamma-1)/\gamma}= \gamma 2^{k(\gamma-1)}
 ;
 \]
 here the first estimate follows from the mean value theorem (recall $\gamma <1$). Consequently,
\[
|\mathcal M(Q_L)|\le \gamma^n\,2^{-kn(1-\gamma)}\to 0
\]
as $k\to \infty$.

This together with \eqref{eq:M_RL-lower} reveals that the doubling property of Lemma~\ref{lem:M-doubling}
cannot hold for $f_\gamma(E)$. Thus $f_\gamma(E)$ is not weakly porous.
\end{proof}

\begin{proof}[Proof for Theorem~\ref{thm:sharpness}]
Since $E$ is weakly porous (Lemma~\ref{lem:E-weakly-porous}), we have \[\delta_E^{-\alpha}\in A_1\subset A_\infty\] for some $\alpha>0$. Hence $\log\delta_E\in\mathrm{BMO}$ and so $E$ is median porous. Set \[\gamma=1/K\in(0,1)\,.\] The radial stretching $f_\gamma$ is $K$-quasiconformal, so $f_\gamma(E)$ is median porous by Theorem~\ref{thm:main}.
However, the set $f_\gamma(E)$ is not weakly porous (Lemma~\ref{lem:image-not-weakly-porous}). This completes the proof.
\end{proof}

\begin{remark}
In dimension one, there are known  examples of sets that are median porous but not weakly porous~\cite{MR4773553,PU25}.
Theorem~\ref{thm:sharpness} extends the one-dimensional example
 \[\{\pm m^\gamma:m=1,2,3,\dots\}\subset\R\] from~\cite[Theorem~9.1]{PU25} to higher dimensions.
\end{remark}

\bibliographystyle{abbrv}

\begin{thebibliography}{10}

\bibitem{MR4065111}
T.~Adamowicz, K.~F\"assler, and B.~Warhurst.
\newblock A {K}oebe distortion theorem for quasiconformal mappings in the
  {H}eisenberg group.
\newblock {\em Ann. Mat. Pura Appl. (4)}, 199(1):147--186, 2020.

\bibitem{MR4773553}
T.~C. Anderson, J.~Lehrb\"ack, C.~Mudarra, and A.~V. V\"ah\"akangas.
\newblock Weakly porous sets and {M}uckenhoupt {$A_p$} distance functions.
\newblock {\em J. Funct. Anal.}, 287(8):Paper No. 110558, 34, 2024.

\bibitem{MR777305}
K.~Astala and F.~W. Gehring.
\newblock Quasiconformal analogues of theorems of {K}oebe and
  {H}ardy-{L}ittlewood.
\newblock {\em Michigan Math. J.}, 32(1):99--107, 1985.

\bibitem{MR1622690}
V.~I. Burenkov.
\newblock {\em Sobolev spaces on domains}, volume 137 of {\em Teubner-Texte zur
  Mathematik [Teubner Texts in Mathematics]}.
\newblock B. G. Teubner Verlagsgesellschaft mbH, Stuttgart, 1998.

\bibitem{MR807149}
J.~Garc\'ia-Cuerva and J.~L. Rubio~de Francia.
\newblock {\em Weighted norm inequalities and related topics}, volume 116 of
  {\em North-Holland Mathematics Studies}.
\newblock North-Holland Publishing Co., Amsterdam, 1985.
\newblock Notas de Matem\'atica, 104. [Mathematical Notes].

\bibitem{MR4966412}
I.~G\'omez~Vargas.
\newblock New characterizations of {M}uckenhoupt {$A_p$} distance weights for
  {$p>1$}.
\newblock {\em J. Math. Anal. Appl.}, 556(1):Paper No. 130091, 27, 2026.

\bibitem{MR1859913}
T.~Iwaniec and G.~Martin.
\newblock {\em Geometric function theory and non-linear analysis}.
\newblock Oxford Mathematical Monographs. The Clarendon Press, Oxford
  University Press, New York, 2001.

\bibitem{MR131498}
F.~John and L.~Nirenberg.
\newblock On functions of bounded mean oscillation.
\newblock {\em Comm. Pure Appl. Math.}, 14:415--426, 1961.

\bibitem{MR3453992}
R.~Korte, N.~Marola, and O.~Saari.
\newblock Homeomorphisms of the {H}eisenberg group preserving {BMO}.
\newblock {\em Arch. Math. (Basel)}, 106(2):175--182, 2016.

\bibitem{PU25}
M.~Pasquariello and I.~Uriarte-Tuero.
\newblock Medians, oscillations, and distance functions.
\newblock Preprint, arXiv:2507.21020v2, 2025.

\bibitem{MR361067}
H.~M. Reimann.
\newblock Functions of bounded mean oscillation and quasiconformal mappings.
\newblock {\em Comment. Math. Helv.}, 49:260--276, 1974.

\bibitem{MR816512}
V.~V. Shan{\cprime}kov.
\newblock The averaging operator with variable radius, and the inverse trace
  theorem.
\newblock {\em Sibirsk. Mat. Zh.}, 26(6):141--152, 191, 1985.

\bibitem{MR1232192}
E.~M. Stein.
\newblock {\em Harmonic analysis: real-variable methods, orthogonality, and
  oscillatory integrals}, volume~43 of {\em Princeton Mathematical Series}.
\newblock Princeton University Press, Princeton, NJ, 1993.
\newblock With the assistance of Timothy S. Murphy, Monographs in Harmonic
  Analysis, III.

\bibitem{MR442226}
A.~Uchiyama.
\newblock Weight functions of the class {$(A\sb{\infty })$} and quasi-conformal
  mappings.
\newblock {\em Proc. Japan Acad.}, 51:811--814, 1975.

\bibitem{MR454009}
J.~V\"ais\"al\"a.
\newblock {\em Lectures on {$n$}-dimensional quasiconformal mappings}, volume
  229 of {\em Lecture Notes in Mathematics}.
\newblock Springer-Verlag, Berlin-New York, 1971.

\bibitem{MR869219}
J.~V\"ais\"al\"a.
\newblock Porous sets and quasisymmetric maps.
\newblock {\em Trans. Amer. Math. Soc.}, 299(2):525--533, 1987.

\bibitem{Vasin2003}
A.~V. Vasin.
\newblock The limit set of a {F}uchsian group and the {D}yn\cprime kin lemma.
\newblock {\em Zap. Nauchn. Sem. S.-Peterburg. Otdel. Mat. Inst. Steklov.
  (POMI)}, 303(Issled. po Line\u{\i}n. Oper. i Teor. Funkts. 31):89--101, 322,
  2003.
\newblock English transl. in J. Math. Sci. (N.Y.) {\bf 129} (2005), no. 4,
  3977--3984.

\end{thebibliography}

\end{document}